\newtheorem{theorem}{Theorem}[section]
\newtheorem{proposition}[theorem]{Proposition}
\newtheorem{corollary}[theorem]{Corollary}
\newtheorem{lemma}[theorem]{Lemma}
\theoremstyle{definition}
\newtheorem{definition}[theorem]{Definition}
\newtheorem{remark}[theorem]{Remark}
\newtheorem{convention}[theorem]{Convention}
\theoremstyle{problem}
\newtheorem{question}[theorem]{Question}
\newcommand{\Aut}{\mathrm{Aut}}
\newcommand{\RR}{\mathbb{R}}
\newcommand{\QQ}{\mathbb{Q}}
\newcommand{\CC}{\mathbb{C}}
\newcommand{\Min}{\mathrm{Min}}
\newcommand{\bd}{\partial}
\newcommand{\id}{\operatorname{id}}
\newcommand{\SL}{\operatorname{SL}}
\newcommand{\T}{T_{d}}
\newcommand{\G}{\operatorname{\mathbb{G}}}
\newcommand{\dist}{\operatorname{dist}}
\newcommand{\Sym}{\operatorname{Sym}}
\newcommand{\supp}{\operatorname{supp}}
\def\og{\leavevmode\raise.3ex\hbox{$\scriptscriptstyle\langle\!\langle$~}}
\def\fg{\leavevmode\raise.3ex\hbox{~$\!\scriptscriptstyle\,\rangle\!\rangle$}}
\title{The universal group of Burger--Mozes and the Howe--Moore property}
\author{Corina Ciobotaru\thanks{Partially supported  by the FRIA; corina.ciobotaru@gmail.com}}
\date{January 3, 2021}
\begin{document}

\renewcommand{\thefootnote}{}
\footnotetext{\textit{MSC classification: 22D30, 22D10, 20E08} }
\footnotetext{\textit{Keywords:} Unitary representations, groups acting on $d$-regular trees, the (relative) Howe--Moore property.}

\newcounter{qcounter}

\maketitle

\begin{abstract}
By constructing  a new unitary representation we prove the universal group $U(F)^+$ of Burger--Mozes does not have the Howe--Moore property when  $F$ is primitive but not $2$-transitive. It is well known $U(F)^+$ does have this property when $F$ is $2$-transitive. Along the way, we give a characterization of the universal group, when $F$ is primitive, to have the Howe--Moore property, and also prove $U(F)^+$ has the relative Howe--Moore property. These two results are a consequence of a strengthening of Mautner's phenomenon for locally compact groups acting on d-regular trees and having Tits' independence property. 
\end{abstract}

\section{Introduction}

The motivation of this paper is to answer the following question proposed several years ago by Marc Burger and Shahar Mozes, and Pierre-Emmanuel Caprace. This question is not explicitly written somewhere, but it was the main 
topic of the author's PhD thesis:
\begin{question}
\label{que::main_question}
Let $G$ be a locally compact, topologically simple group acting continuously and properly by type-preserving automorphisms on a $d$-regular tree $T_{d}$. If $G$ has the Howe--Moore property is it true that G must act $2$--transitively on the boundary of the tree $T_{d}$?
\end{question}

Recall the following. Let $G$ be a locally compact group and $(\pi, \mathcal{H})$ a (strongly continuous) unitary representation  $\pi :G \to \mathcal{U}(\mathcal{H}) $ on a (infinite dimensional) complex Hilbert space $(\mathcal{H}, \left\langle \cdot , \cdot \right\rangle)$. For $v,w \in \mathcal{H}$ the associated \textbf{$(v,w)$-matrix coefficient} is the map $c_{v,w} : G \to \mathbb{C}$ given by $c_{v,w}(g):=\left\langle \pi(g)v ,w \right\rangle$. We say $c_{v,w}$ \textbf{vanishes at infinity} if, for every $\epsilon >0$, the subset $\{g \in G \; \vert \; \vert c_{v,w}(g)\vert \geq \epsilon \}$ is compact in $G$; equivalently,  $\lim\limits_{g \to \infty} c_{v,w}(g)  =0$, where $\infty$ represents the one-point compactification of the locally compact group $G$.

\begin{definition}
\label{def::HM}
Let $G$ be a locally compact group. We say $G$ has \textbf{the Howe--Moore property} if for any unitary representation of $G$ that is without non-zero $G$--invariant vectors, all its matrix coefficients vanish at infinity. 
\end{definition}

Just to recall, the Howe--Moore property \cite{HM79} is a harmonic analytic property that has far-reaching applications to many areas of mathematics, such as homogeneous dynamics, geometry, or number theory. To list some, it has notable consequences regarding discrete subgroups of semi-simple Lie groups, Mostow's rigidity theorem, mixing and equidistribution properties, Ratner's theorems on unipotent flows, or dynamical systems endowed with an invariant measure. 

All known examples of locally compact groups having the Howe--Moore property are: 
\begin{enumerate}
\item
 connected, non-compact, simple real Lie groups, with finite center \cite{HM79}, e.g., $\SL(n, \RR)$
 \item
  isotropic simple algebraic groups over non Archimedean local fields \cite{HM79}, e.g., $\SL(n,\QQ_p)$
 \item
 closed, topologically simple subgroups of $\Aut(T)$ with a $2$-transitive action on the boundary of the bi-regular tree $T$, that has valence $\geq 3$ at every vertex, \cite{BM00a}, e.g., the universal group $U(F)^+$ of Burger--Mozes, when F is $2$-transitive.  
 \end{enumerate}
 An important fact the above mentioned groups have in common is their polar decomposition $KA^+K$, where $K$ is a maximal compact subgroup and $A^+$ is an abelian maximal sub-semi-group, the latter being sufficient to imply the Howe--Moore property. This was employed in \cite{Cio}  to give a unified proof for all these above examples. 

Apart from those mentioned families of groups, there is no other known example of a locally compact group that has the Howe--Moore property. It is therefore legitimate to ask if the Howe--Moore property holds only for those mentioned simple algebraic groups over local fields and for their relatives coming from groups acting on trees.  As this question is more difficult to be studied in full generality, in this paper we simply restrict to Question~\ref{que::main_question} above, leaving the general case to be studied in an upcoming paper. 

To make things even more easier, a test case is proposed to be studied with respect to Question~\ref{que::main_question}, this is the universal group $U(F)$ introduced by Burger and Mozes~\cite[Section~3]{BM00a}. Recall, $U(F)$ is a closed subgroup of the full group of automorphisms $\Aut(T_d)$ of a $d$-regular tree $T_d$. In fact, as $U(F)$ is not a simple group, we study its simple subgroup $U(F)^{+}$ (for its definition and its properties see below). The main reason why the universal group $U(F)^{+}$ is suggested as the first example that should be studied towards answering Question~\ref{que::main_question} is the following. By Caprace and De Medts [CDM11, Prop. 4.1], we know $F \leq  \Sym(\{1, \cdots, d\})$ is primitive if and only if every proper open subgroup of $U(F)^{+}$ is compact. The latter condition that every proper open subgroup
is compact is well-known to be satisfied by any group having the Howe--Moore property; however, it is not sufficient, in general, to imply the Howe--Moore property. To conclude, we emphasize a second question that stems from Question~\ref{que::main_question}:

\begin{question}
\label{que::second_main_question}
Is it true the group $U(F)^{+}$ does not have the Howe--Moore property when $F$ is primitive but not $2$--transitive?
\end{question}

In this article we provide a positive answer to Question \ref{que::second_main_question}, by constructing a Hilbert space $\mathcal{H}$ endowed with a family of inner products, and a non-trivial unitary representation $(\pi, \mathcal{H})$ of $U(F)^+$, when $F$ is primitive but not $2$-transitive, that has some of its matrix coefficients not vanishing at infinity and does not admit non-zero $U(F)^{+}$-invariant vectors.

We emphasize that the theory of unitary representations for closed subgroups of $\Aut(T_d)$ that do not act $2$--transitively on the boundary of $T_d$ is very less developed.  Just to mention, parabolically induced unitary representations of the universal group $U(F)^+$  have all their matrix coefficients vanishing at infinity, as it was proven in \cite{Ci_a}. Or that the Hecke algebra of $U(F)^{+}$, when $F$ being primitive but not $2$--transitive, with respect to the maximal compact subgroup $K$ is infinitely generated and infinitely presented. On the contrary, when $F$ is just $2$-transitive that Hecke algebra of $U(F)^+$ is commutative, finitely generated admitting only one generator (see \cite{Ci_bb}).

\medskip
Let us state the main results of this article. The first theorem provides a characterization of the group $U(F)^+$, with $F$ being primitive, but not cyclic of prime order, to have the Howe--Moore property.  

\begin{theorem}%(See Theorem~\ref{thm::equiv_Howe-Moore})
\label{thm::equiv_Howe-Moore_1}
Let $F \leq \Sym\{1,\cdots, d\}$ be primitive, but not cyclic of prime order. Then $U(F)^+$ has the Howe--Moore property if and only if for every unitary representation $(\pi, \mathcal{H})$ of $U(F)^+$, without non-zero $U(F)^+$-invariant vectors, and for every $v \neq 0 \in \mathcal{H}$ the closed isotropy subgroup $(U(F)^+)_{v}:=\{ g \in  U(F)^+ \; \vert \; \pi(g)(v)=v \}$ is compact.
\end{theorem}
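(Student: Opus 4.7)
The plan is to argue the two implications separately. The direct implication is immediate from the Howe--Moore property: given a unitary representation $(\pi,\mathcal H)$ of $U(F)^+$ without non-zero invariant vectors and $v\neq 0$, every $g \in U(F)^+_v$ satisfies $\langle\pi(g)v,v\rangle = \|v\|^2$, so the isotropy sits inside $\{g \in U(F)^+ : |\langle \pi(g)v,v\rangle| \geq \tfrac12 \|v\|^2\}$, which by Howe--Moore is relatively compact in $U(F)^+$; being closed, $U(F)^+_v$ is then compact.

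For the converse I will argue by contradiction. Assume every such isotropy $U(F)^+_v$ is compact, yet there exist $(\pi,\mathcal H)$ without non-zero invariant vectors, vectors $v, w \in \mathcal{H}$, and a sequence $(g_n) \subset U(F)^+$ leaving every compact set such that $|\langle \pi(g_n)v, w\rangle|$ stays bounded away from $0$. The strategy is to construct from this data a non-zero vector $v_0 \in \mathcal{H}$ with non-compact isotropy, yielding the sought contradiction. To this end I would invoke the Cartan ($KAK$) decomposition available for vertex-transitive closed subgroups of $\Aut(\T)$: fix a vertex $x_0$ with stabiliser $K \leq U(F)^+$, a geodesic axis through $x_0$, and write $g_n = k_n a_n k_n'$ with $k_n, k_n' \in K$ and $a_n$ a hyperbolic translation along the axis whose combinatorial length tends to $\infty$. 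Compactness of $K$ and weak sequential compactness of the unit ball of $\mathcal H$ let me, after passing to a subsequence, assume $k_n \to k$, $k_n' \to k'$ in $K$ and $\pi(a_n)\pi(k')v \rightharpoonup v_0$; the matrix-coefficient lower bound then forces $\langle v_0, \pi(k^{-1})w\rangle \neq 0$, so $v_0 \neq 0$.

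The heart of the argument---and where I expect the principal obstacle---is to prove that $v_0$ has non-compact stabiliser in $U(F)^+$. This is precisely where the ``strengthening of Mautner's phenomenon'' advertised in the abstract intervenes. The classical Mautner principle shows that $v_0$ is fixed by every $h$ satisfying $a_n^{-1} h a_n \to e$; for hyperbolic translations of unbounded length along a fixed axis, these contracting elements form a horospheric subgroup of $\Aut(\T)$, namely a pointwise fixator of a half-tree or of an end of $\T$. Such fixators are manifestly non-compact inside $\Aut(\T)$, but the delicate point is to ensure that they intersect $U(F)^+$ in a genuinely non-compact subgroup and, more importantly, that they act on $v_0$ through the refined limit procedure even when the classical contraction condition is only approximately satisfied. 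This is exactly what Tits' independence property for $U(F)^+$, combined with the hypotheses that $F$ is primitive and not cyclic of prime order, is designed to deliver: independence allows one to build automorphisms supported on one side of an edge, and primitivity (with the exclusion of the rigid cyclic-of-prime-order case, in which $U(F)^+$ is too small for Mautner-style contractions to yield non-trivial stabilising elements) ensures that enough such elements actually lie in $U(F)^+$ and survive the limiting process. Once the non-compactness of $U(F)^+_{v_0}$ is established, it directly contradicts the standing hypothesis and closes the argument.
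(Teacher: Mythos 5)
Your forward implication is fine. The converse, however, rests on a decomposition that does not exist for the groups in question. Writing $g_n = k_n a_n k_n'$ with $k_n,k_n'$ in a fixed vertex stabilizer $K$ and $a_n$ hyperbolic translations along a \emph{fixed} axis is exactly a $KA^+K$ decomposition; it requires $K$ to act transitively on the spheres around $x_0$, which for $U(F)^+$ happens precisely when $F$ is $2$--transitive. The theorem is only interesting when $F$ is primitive but not $2$--transitive, and in that case the paper explicitly points out that $U(F)^+$ has no $KA^+K$ decomposition with $A^+$ an abelian semigroup (moreover $U(F)^+$ is not even vertex-transitive, since its hyperbolic elements all have even translation length, so it preserves the bipartition). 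So the reduction of the arbitrary sequence $(g_n)$ to translations along one axis is a genuine gap, not a technicality. A second error: the pointwise fixator of a half-tree of a locally finite tree fixes a vertex and is therefore a closed subgroup of a compact vertex stabilizer, hence \emph{compact}; it is not ``manifestly non-compact''. The non-compact object one needs is the horospherical stabilizer $\G_{\xi}^{0}$ (elliptic elements fixing the end $\xi$), and the fact that $\G_{\xi}^{0}$ is non-compact inside $U(F)^+$ is itself a non-trivial lemma in the paper (Lemma~\ref{lem::Stab_end_notcompact}), proved via Tits' independence and the Caprace--De~Medts characterization of primitivity (every proper open subgroup is compact); it cannot be taken for granted. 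Finally, the step you flag as delicate---getting the whole of $\G_{\xi}^{0}$, not merely the contraction group of the sequence, to fix $v_0$---is precisely Proposition~\ref{prop::strong_Mautner} and is not supplied by your sketch.

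The paper's route avoids your obstacle entirely: after extracting a weakly convergent subsequence $\pi(g_{n_k})v \rightharpoonup v_0 \neq 0$, one uses compactness of $\T \cup \bd\T$ in the cone topology to pass to a further subsequence with $g_{n_k}(x) \to \xi \in \bd\T$ for a fixed vertex $x$ (Remark~\ref{rem::conv_endpoint}); no hyperbolic elements or Cartan decomposition are needed. Mautner's phenomenon for the \emph{sequence} $\{g_{n_k}\}$ gives invariance of $v_0$ under its contraction group, Lemma~\ref{lem::contr_group} shows this contraction group contains the fixators of half-trees pointing at $\xi$, and Tits' independence property then upgrades this to $\G_{\xi}^{0} \leq \G_{v_0}$ (Proposition~\ref{prop::strong_Mautner}: any $g \in \G_{\xi}^{0}$ is approximated by products $t_j h_j$ with $t_j$ fixing a half-tree towards $\xi$ and $h_j \to e$). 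Combined with Lemma~\ref{lem::Stab_end_notcompact}, this makes $\G_{v_0}$ non-compact and yields the contradiction. If you want to salvage your argument, replace the $KAK$ step by this subsequence-and-endpoint extraction and prove the two missing ingredients ($\G_{\xi}^{0} \leq \G_{v_0}$ and non-compactness of $\G_{\xi}^{0}$) rather than asserting them.
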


As a second theorem we obtain:
\begin{theorem}
\label{thm::rel_Howe-Moore_1}
Let $F \leq \Sym\{1,\cdots, d\}$ be primitive, but not cyclic of prime order. Then for each $\xi \in \partial \T$ the group $U(F)^+$ has the relative Howe--Moore property with respect to its closed subgroup $(U(F)^+)_{\xi}^{0}:=\{g \in  U(F)^+\; \vert \; g(\xi)=\xi \text{ and } g \text{ is elliptic}\}$.
 \end{theorem}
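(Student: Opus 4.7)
The plan is to combine the strengthening of Mautner's phenomenon established earlier in the paper with a standard weak-limit argument. Set $G := U(F)^+$ and $H := (U(F)^+)_\xi^0$. Every element of $H$ fixes pointwise a sub-ray $[x,\xi)$ of $\T$, and any two such rays share a common tail, so $H$ is indeed a subgroup. Because $H$ is closed in $G$, a sequence $(h_n) \subset H$ leaves every compact subset of $H$ if and only if it leaves every compact subset of $G$.

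Let $(\pi, \mathcal{H})$ be a unitary representation of $G$ with no non-zero $H$-invariant vectors, fix $v, w \in \mathcal{H}$, and let $h_n \to \infty$ in $H$. By the Banach--Alaoglu theorem applied to the closed unit ball of $\mathcal{H}$, after passing to a subsequence we may assume $\pi(h_n) v \rightharpoonup v_\infty$ weakly; then $\langle \pi(h_n) v, w\rangle \to \langle v_\infty, w\rangle$, so the theorem reduces to showing $v_\infty = 0$.

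The decisive step is the following Mautner-type observation: if $g \in G$ satisfies $h_n^{-1} g h_n \to \id$ along a further subsequence, then $\pi(g) v_\infty = v_\infty$. Indeed, for any $y \in \mathcal{H}$,
$$ \bigl\langle (\pi(g) - \id) v_\infty,\, y \bigr\rangle \;=\; \lim_n \bigl\langle \pi(h_n)\bigl(\pi(h_n^{-1} g h_n) v - v\bigr),\, y \bigr\rangle, $$
and the vector inside $\pi(h_n)$ tends to zero in norm by strong continuity of $\pi$, so the right-hand side vanishes. The strengthened Mautner phenomenon proved earlier in the paper supplies, for every sequence $(h_n) \to \infty$ in $H$, a subset of $H$ whose elements $g$ satisfy the above conjugation convergence along some subsequence and whose generated closed subgroup is all of $H$. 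Applying the Mautner observation to each such $g$ and using continuity of $\pi$, we conclude that $v_\infty$ is $H$-invariant, hence zero by hypothesis.

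The main obstacle is the construction of such a rich Mautner-convergent subset from an arbitrary escaping sequence $(h_n) \subset H$. This is where the assumption that $F$ is primitive but not cyclic of prime order enters crucially, together with Tits' independence property of $U(F)^+$: they are what drive the strengthened Mautner phenomenon of the paper's core result, which one applies by analyzing the fixed ray of each $h_n$ and producing convergent conjugations inside fixators of sub-rays to $\xi$. Once enough such $g$ are produced, density in $H$ follows from the transitivity properties built into the $U(F)^+$-action on $\T$.
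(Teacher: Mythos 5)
Your argument is correct and takes essentially the same route as the paper: extract a weak limit of $\pi(h_n)v$ along an escaping sequence in $(U(F)^+)_{\xi}^{0}$, observe that $h_n(x)\to\xi$, and invoke the strengthened Mautner phenomenon of Proposition~\ref{prop::strong_Mautner} (driven by Tits' independence property and the contraction-group Lemma~\ref{lem::contr_group}) to conclude that the weak limit is $(U(F)^+)_{\xi}^{0}$-invariant, hence zero. The paper merely packages this as a contradiction with a non-vanishing matrix coefficient and quotes Mautner's phenomenon instead of reproving the inline computation, so the differences are cosmetic.
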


The relative Howe--Moore property was introduced and studied by Cluckers--de Cornulier--Louvet--Tessera--Valette~\cite{CCL+} and it is a weakening of the Howe--Moore property.

 %For a closed subgroup  $H$ of $G$, we say  $c_{v,w}$ \textbf{vanishes at infinity with respect to $H$} if  for every $\epsilon >0$, the subset $\{h \in H \; \vert \; \vert c_{v,w}(h)\vert \geq \epsilon \}$ is compact in $H$.

\begin{definition}
Let $G$ be a locally compact group and $H$ be a closed subgroup of $G$. We say the pair $(G, H)$ has the \textbf{relative Howe--Moore property} if every unitary representation $(\pi, \mathcal{H})$ of $G$ either it has non-zero $H$--invariant vectors, or the restriction $\pi\vert_{H}$ is a unitary representation of $H$ that has all its matrix coefficients vanishing at infinity (with respect to $H$).
\end{definition}

Theorems~\ref{thm::equiv_Howe-Moore_1} and~\ref{thm::rel_Howe-Moore_1} are a consequence of the following strengthening of Mautner's phenomenon for locally compact groups acting on $d$-regular trees and having Tits' independence property (see Definition~\ref{def::Tits_prop}). We state this result in the general framework of groups acting on locally finite infinite trees.

\begin{proposition}
\label{prop::strong_Mautner_1}
Let $T$ be a locally finite infinite tree. Let $G \leq  \Aut(T)$ be a closed, non-compact subgroup with Tits' independence property and $(\pi,\mathcal{H})$ be a unitary representation of $G$. Assume there exist a sequence $\{g_n\}_{n>0} \subset G$ and a vector $v \in \mathcal{H}$ such that $g_n \to \infty$ and $\{\pi(g_n)(v)\}_{n>0}$ weakly converges to a non-zero vector $v_0 \in \mathcal{H}$.

Then, for each $\xi \in \bd T$, with the property that there exist $x \in T$ and a subsequence $\{g_{n_k}\}_{n_k>0}$ such that $g_{n_k}(x) \to \xi $, we have that $N^{+}_{\{g_{n_k}\}_{n_k}} \leq G_{\xi}^{0} \leq G_{v_0}$. Here $G_{\xi}^{0}:=\{ g \in G \; \vert \; g (\xi)=\xi \text{ and } g \text{ is elliptic} \}$ and $G_{v_0}:=\{g \in G \; \vert \; \pi(g)(v_0)=v_0 \}$.

Moreover, if $G_{v_0}$ is non-compact and does not contain hyperbolic elements then $G_{v_0}=G_{\eta}^{0}$, for an  end $\eta \in \bd T$.
\end{proposition}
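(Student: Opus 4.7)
My plan is to establish the middle inclusion $G_{\xi}^{0}\leq G_{v_{0}}$, which contains the essential Mautner-type content; the flanking inclusion $N^{+}_{\{g_{n_{k}}\}}\leq G_{\xi}^{0}$ should follow directly from the definition of the contraction subgroup $N^{+}$ (its elements are elliptic and fix $\xi$), and the concluding ``moreover'' statement from a common-fixed-end argument combined with what precedes it.

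Fix $h\in G_{\xi}^{0}$. Since $h$ is non-hyperbolic and fixes $\xi$, its fixed-point set contains a geodesic ray $R=(y_{0},y_{1},\ldots)$ converging to $\xi$ and pointwise fixed by $h$. For each $N\geq 1$ the edge $e_{N}=[y_{N-1},y_{N}]$ is thus stabilized by $h$, and Tits' independence property for $G$ yields
\[
G_{e_{N}}=G_{H^{-}_{e_{N}}}\times G_{H^{+}_{e_{N}}},
\]
where $H^{+}_{e_{N}}$ denotes the half-tree whose end set contains $\xi$. Decompose $h=\alpha_{N}\beta_{N}$ with $\alpha_{N}\in G_{H^{-}_{e_{N}}}$ and $\beta_{N}\in G_{H^{+}_{e_{N}}}$; note $\alpha_{N},\beta_{N}\in G$.

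The key step is the Mautner-type claim $\pi(\beta_{N})v_{0}=v_{0}$ for every $N$. Because each $g_{n_{k}}$ is an isometry and $g_{n_{k}}(x)\to\xi$, for every vertex $y\in T$ one has $g_{n_{k}}(y)\to\xi$ as well (it stays at bounded distance from $g_{n_{k}}(x)$), and in particular $g_{n_{k}}(y)\in H^{+}_{e_{N}}$ for $k$ large; since $\beta_{N}$ fixes $H^{+}_{e_{N}}$ pointwise, $g_{n_{k}}^{-1}\beta_{N}g_{n_{k}}$ fixes $y$. This holds for every $y$, so $g_{n_{k}}^{-1}\beta_{N}g_{n_{k}}\to e$ in $G$, and strong continuity of $\pi$ gives $\pi(g_{n_{k}}^{-1}\beta_{N}g_{n_{k}})v\to v$ in norm. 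The identity
\[
\pi(\beta_{N}g_{n_{k}})v=\pi(g_{n_{k}})\pi(g_{n_{k}}^{-1}\beta_{N}g_{n_{k}})v,
\]
together with unitarity of $\pi(g_{n_{k}})$, shows that $\pi(\beta_{N}g_{n_{k}})v$ and $\pi(g_{n_{k}})v$ have the same weak limit, namely $v_{0}$; since $\pi(\beta_{N})$ is bounded, this weak limit also equals $\pi(\beta_{N})v_{0}$, establishing the claim.

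Now let $N\to\infty$. The $\beta_{N}$ all lie in the compact edge stabilizer $G_{e_{N_{0}}}$ for $N\geq N_{0}$, and since $\beta_{N}|_{H^{-}_{e_{N}}}=h|_{H^{-}_{e_{N}}}$ while the half-trees $H^{-}_{e_{N}}$ exhaust $T$ minus a shrinking horoball at $\xi$, $\beta_{N}\to h$ in the topology of $G$. Strong continuity of $\pi$ then gives $v_{0}=\pi(\beta_{N})v_{0}\to\pi(h)v_{0}$, so $h\in G_{v_{0}}$. For the ``moreover'' statement, if $G_{v_{0}}$ is non-compact and contains no hyperbolic elements, the standard dichotomy for tree actions (an unbounded group of elliptic automorphisms must fix a common end) produces $\xi'\in\bd T$ with $G_{v_{0}}\leq G_{\xi'}^{0}$; combining this with $G_{\xi}^{0}\leq G_{v_{0}}$ for any subsequential limit $\xi$ of $g_{n_{k}}(x)$ forces $\xi=\xi'$ and hence $G_{v_{0}}=G_{\xi}^{0}$. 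The main obstacle I expect lies in making sure the Tits decomposition really takes place inside $G$ (so that $\alpha_{N},\beta_{N}\in G$) and that the convergence $\beta_{N}\to h$ occurs in the topology of $G$ strongly enough for $\pi$-continuity; both should be consequences of Tits' independence applied to $G$ itself together with the compactness of edge stabilizers.
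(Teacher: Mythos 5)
Your proof of the two inclusions $N^{+}_{\{g_{n_k}\}}\leq G_{\xi}^{0}\leq G_{v_0}$ is correct, and it is essentially the argument of the paper: elements of the contraction group eventually fix the vertices $g_{n_k}(x)$, hence fix $\xi$ and are elliptic (and $G_{\xi}^{0}$ is closed); then, for $h\in G_{\xi}^{0}$, one decomposes $h=\alpha_N\beta_N$ via Tits' independence along the fixed ray towards $\xi$, notes that the factor $\beta_N$ fixing the half-tree containing $\xi$ pointwise lies in the contraction group (your direct computation is exactly Lemma~\ref{lem::contr_group}), gets $\pi(\beta_N)v_0=v_0$ by the standard Mautner conjugation trick, and concludes by letting $N\to\infty$, where $\beta_N\to h$ because $\beta_N$ agrees with $h$ on the exhausting half-trees $H^{-}_{e_N}$ (the paper phrases this as $t_k=gh_k^{-1}\to g$ and uses closedness of $G_{v_0}$). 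All of this, including the fact that the Tits factors lie in $G$, is fine.

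The ``moreover'' part, however, has a genuine gap. From $G_{\xi}^{0}\leq G_{v_0}\leq G_{\xi'}^{0}$ you cannot conclude that $\xi=\xi'$: nothing prevents $G_{\xi}^{0}$ from being a proper subgroup of $G_{\xi'}^{0}$ with $\xi\neq\xi'$, and indeed the proposition only asserts $G_{v_0}=G_{\eta}^{0}$ for \emph{some} end $\eta$, which need not be the subsequential limit $\xi$ of $g_{n_k}(x)$. A concrete obstruction: let $T$ be regular, $\omega\in\bd T$, and $G=\Aut(T)_{\omega}$, which is closed, non-compact and has Tits' independence property (both factors of the usual decomposition fix $\omega$). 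Here every $G_{\xi}^{0}$ with $\xi\neq\omega$ is contained in $G_{\omega}^{0}$. Composing the Busemann homomorphism $\beta:G\to\ZZ$ (whose kernel is exactly the set of elliptic elements, i.e.\ $G_{\omega}^{0}$) with the direct sum of a faithful character of $\ZZ$ and the regular representation of $\ZZ$, and taking $g_k=a^{n_k}$ for a hyperbolic $a\in G$ with attracting end $\xi\neq\omega$ and $n_k$ chosen so that the character values tend to $1$, one obtains $\pi(g_k)v$ weakly convergent to a non-zero $v_0$ with $G_{v_0}=\ker\beta=G_{\omega}^{0}$: this group is non-compact and has no hyperbolic elements, yet $g_k(x)\to\xi\neq\omega$ and $G_{v_0}\supsetneq G_{\xi}^{0}$. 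So the step ``forces $\xi=\xi'$'' fails, and with it your derivation of the equality. The missing ingredient, which is how the paper closes the argument, is the reverse inclusion $G_{\xi'}^{0}\leq G_{v_0}$: choose $h_m\in G_{v_0}$ with $h_m\to\infty$; since the $h_m$ are elliptic and fix $\xi'$, one has $h_m(x)\to\xi'$ in the cone topology, and $\pi(h_m)v_0=v_0$ trivially converges weakly to $v_0$, so the first part of the proposition applied to the sequence $\{h_m\}$ and the vector $v_0$ gives $G_{\xi'}^{0}\leq G_{v_0}$, hence $G_{v_0}=G_{\xi'}^{0}$.
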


As a corollary we obtain:

\begin{corollary}\label{cor::end_stab_vector_1}
Let $F$ be primitive but not $2$--transitive and not cyclic of prime order. Let $(\pi,\mathcal{H})$ be a unitary representation of $U(F)^+$ without non-zero $U(F)^+$-invariant vectors and where $\mathcal{H}$ is a separable complex Hilbert space. Consider $v \in \mathcal{H}$ such that $(U(F)^+)_{v}$ is not compact. Then $(U(F)^+)_{v}=(U(F)^+)_{\eta}^{0}$ for an end $\eta \in \bd \T$.
\end{corollary}

Finally we state the third main result of this article answering Question \ref{que::second_main_question}.
\begin{theorem}
\label{thm_iff_HM}
Let $F$ be primitive but not cyclic of prime order. Then $U(F)^+$ has the Howe--Moore property if and only if $F$ is $2$-transitive.
\end{theorem}

The structure of this article is the following. In the first place, Section~\ref{sec::new_prop} adds to the (long) list of Section~\ref{sec::def_prop} new equivalent conditions of the universal group $U(F)^+$ to act $2$--transitively on the boundary $\bd \T$. Section~\ref{sec::H-M-prop} gathers new algebraic and harmonic analytic properties of the group $U(F)^+$, when $F$ is primitive, but not cyclic of prime order. Those properties are used to prove Theorem~\ref{thm::equiv_Howe-Moore_1} and Proposition~\ref{prop::strong_Mautner_1}.  The second main result of this article, Theorems~\ref{thm::rel_Howe-Moore_1}, is proved in Section~\ref{sec::rel_Howe_Moore}. Finally, the most important result, Theorem \ref{thm_iff_HM}, is proven in Section \ref{sec::const_unit_rep}.

\subsection*{Acknowledgements} I would like to thank Pierre-Emmanuel Caprace for the valuable discussions during the author's PhD thesis  when the results of Sections \ref{sec::new_prop} and \ref{sec::H-M-prop} were proven. I am also grateful to Marc Burger and Alain Valette for useful discussions during those years, Uri Bader for his encouragements,  and the referees who did not accept for publication the partial results of Sections \ref{sec::new_prop}, \ref{sec::H-M-prop} and \ref{sec::rel_Howe_Moore}. Finally, many thanks to the manager Urs Nef for his encouragements during my interview at Leica Geosystems to break the question and come up with an innovative, easy solution.

\section{Definitions and some known properties}
\label{sec::def_prop}

First, let us fix some general notation. Denote by $\T$ a \textbf{$d$-regular tree, with $d \geq 3$,} and by $\Aut(\T)$ its full group of automorphisms that is endowed with the compact-open topology. Denote by $\dist_{\T}(\cdot, \cdot)$ the usual metric on $\T$ where every edge has length one.  Consider the vertices of each edge of $T_d$ are colored differently and each edge uses the same set of two colors. We say $g \in  \Aut(T_d)$ is \textbf{type-preserving} if $g$ preserves the above coloration of the tree. Then $G \leq \Aut(T_d)$ is type-preserving if all elements of G are type-preserving. Let $\Aut(\T)^{+}$ be the group of all type-preserving automorphisms of $\T$. Denote by $\partial \T$ the ends of $\T$, and we call $\partial \T$ the boundary of $\T$.  For each two points $x,y \in \T \cup \bd \T$, let $[x,y]$ be the unique geodesic between $x$ and $y$ in $\T \cup \bd \T$. 

For $G \leq \Aut(\T)$ and $x, y \in \T\cup \bd \T$ we define $$G_{[x,y]}:=\{ g \in G \; \vert \; g \text{ fixes pointwise the geodesic } [x,y]\}.$$ In particular, $G_{x}= \{g \in G \; \vert \; g(x)=x\}$. For $\xi \in \bd \T$ we define 
$$G_{\xi}:= \{g \in G \; \vert \; g(\xi)=\xi \} \; \;  \text{ and }  \; \;G_{\xi}^{0}:= \{g \in G \; \vert \; g(\xi)=\xi \text{ and } g \text{ fixes a vertex of } \T\}.$$ Note  $G_{\xi}$ can contain hyperbolic elements; if this is the case then $G_{\xi}^{0} \lneq G_{\xi}$.

For a hyperbolic element $\gamma \in \Aut(\T)$, we denote $\vert \gamma \vert:= \min_{x \in \T}\{ \dist_{\T}(x, \gamma(x))\}$, which is called the translation length of $\gamma$, and set $\Min(\gamma):=\{x \in \T \; \vert \;  \dist_{\T}(x, \gamma(x))=\vert \gamma \vert\}$.

\medskip
%As we mentioned in the Introduction, the group $U(F)$ was introduced by Burger--Mozes in~\cite[Section~3]{BM00a}. In his PhD thesis~\cite{Amann}, Amann studies this group from the point of view of its unitary representations. 

Recall now the family of universal groups $U(F)$ introduced by Burger--Mozes in \cite[Section~3]{BM00a}. In his PhD thesis \cite{Amann} Amann studies these groups from the point of view of their unitary representations.

\begin{definition}
\label{def::legal_color}
Let $\iota: E(T_d) \to \{1,...,d\} $, where $E(T_d)$ is the set of unoriented edges of the tree $T_d$.  The set $E(x) \subset E(T_d)$ consisting of all the edges containing the vertex $x \in T_d$ is called the star of $x$. We say the map $\iota$ is a \textbf{legal coloring} of the tree $T_d$ if for every vertex $x  \in T_d$ the map $\iota\vert_{E(x)}$ is in bijection with $\{1,...,d\} $. 
\end{definition}

\begin{definition}
\label{def::universal_group}
Let $F$ be a subgroup of permutations of the set $\{1,...,d\}$ and let $\iota$ be a legal coloring of $T_d$. We define the \textbf{universal group}, with respect to $F$ and $\iota$, to be 
$$ U(F):= \{g \in \Aut(T_d) \; \vert \; \iota \circ g \circ (\iota \vert_{E(x)})^{-1} \in F, \text{ for every x}\in T_d \}. $$
When $F$ is the full permutation group $\Sym(\{1,...,d\})$, $U(F)$ equals $\Aut(T_d)$. If $F=\id$ then $U(F)$ is the group of all automorphisms preserving the coloring $\iota$ of $T_d$. 

We say an element $g \in \Aut(T_d)$ is \textbf{edge-stabilizer} if there is an edge of $T_d$ stabilized be $g$ pointwise. 
By $U(F)^{+}$ we denote the subgroup of $U(F)$ generated by the edge-stabilizers in $U(F)$. When $F= \Sym(\{1,...,d\})$,  $U(F)^{+}=\Aut(T_d)^+ $ the group of type-preserving automorphisms of $T_d$,  the latter being a simple subgroup of index $2$ of $\Aut(\T)$ (see Tits~\cite{Ti70}).
\end{definition}

By  \cite[Prop.~52]{Amann} the groups $U(F),U(F)^{+} $ are independent of the legal coloring $\iota$ of $T_d$. From the definition, $U(F)$ and $U(F)^{+}$ are closed subgroups of $\Aut(T_d)$.

%One of the important properties of these groups is that $U(F)$ (resp.,  $U(F)^{+}$) acts $2$--transitively on the boundary $\partial \T$ if and only if $F$ is $2$--transitive. Moreover, $U(F)^{+}$ is either trivial or is a topologically simple group. For example, when $F$ is primitive but not cyclic of prime order, $U(F)^+$ is not trivial, and when $F$ is $2$--transitive, the group $U(F)^{+}$ is not trivial and has the Howe--Moore property (see \cite{BM00a}).

Another key property which is used in the sequel is the following.
 
\begin{definition}[See \cite{Ti70}]
\label{def::Tits_prop}
Let $T$ be a locally finite tree and let $G \leq \Aut(T)$ be a closed subgroup. We say  $G$ has \textbf{Tits' independence property} if for every edge $e$ of $T$ we have the equality $G_e=G_{T_{e,1}}G_{T_{e,2}}$, where $T_{e,i}$ are the two infinite half sub-trees of $T$ emanating from the edge $e$ and containing $e$,  and $G_{T_{e,i}}$ is the pointwise stabilizer of the half-tree $T_{e,i}$.
\end{definition}

We mention Tits' independence property guarantees the existence of enough rotations in the group $G$. It is used in the work of Tits as a sufficient condition to prove simplicity of large subgroups of $\Aut(\T)$ (see Tits~\cite{Ti70}). In his thesis \cite[Theorem~2]{Amann} Amann employes it to give a complete classification of all super-cuspidal representations of a closed subgroup in $\Aut(\T)$ acting transitively on the vertices and on the boundary of $\T$ and having Tits' independence property. For a closed subgroup $G$ of $\Aut(\T)$ that acts transitively on the vertices and on the boundary of $\T$ but which does not have Tits' independence property less is known about the complete classification of all its super-cuspidal representations. In contrast, for the above mentioned groups, with or without Tits' independence property, the remaining two classes of irreducible unitary representations, namely the special and respectively, the spherical ones,  are completely classified in Fig{\`a}-Talamanca--Nebbia~\cite[Chapter III, resp. Chapter II]{FigaNebbia}. 

\medskip
Following Burger--Mozes and Amann~\cite{BM00a, Amann}, we enumerate the following properties of the groups $U(F)$ and $U(F)^{+}$:
\begin{list}{\arabic{qcounter})~}{\usecounter{qcounter}}
\item
\label{diff_properties_1}
$U(F)$ and $U(F)^{+}$ have Tits' independence property; 
\item
\label{diff_properties_4_bis}
$U(F)^{+}$ is trivial or simple;
\item
\label{diff_properties_2}
if $F$ acts transitively on the set $\{1,...,d\}$ then $U(F)$ acts transitively on the edges of $\T$ and  $U(F), U(F)^{+}$ are unimodular groups; 
\item
\label{diff_properties_3}
$U(F)$ (resp., $U(F)^{+}$) acts $2$--transitively on the boundary $\partial \T$ if and only if $F$ is $2$--transitive;
\item
\label{diff_properties_4}
When $F$ is transitive and generated by its point stabilizers, by \cite[Prop.~57]{Amann}  $U(F)^{+}$ is edge-transitive, $U(F)^{+}= U(F) \cap \Aut(\T)^{+}$ and $U(F)^{+}$ is of index $2$ in $U(F)$. For example, this is the case when $F$ is primitive but not cyclic of prime order. When $F$ is primitive and not generated by its point stabilizers, all point stabilizers of $F$ are equal. This implies all point stabilizers of $F$ are just the identity, that $F$ is cyclic of prime order and the group $U(F)^+$ is trivial. Under these hypotheses, $U(F)$ is a nontrivial, discrete subgroup of $\Aut(\T)$. In order to avoid heavy formulation, we simply use \textit{$F$ is primitive} to mean that \textit{$F$ is primitive but not cyclic of prime order}.
\end{list}

Regarding the $2$--transitive action on the boundary $\partial \T$ of a group $G \leq \Aut(\T)$, we have the following general well-known equivalences.

\begin{proposition}(See \cite{FigaNebbia}, \cite[Lemma~3.1.1]{BM00a})
\label{prop::transitivity1}
For a closed subgroup $G \leq \Aut(\T)$ the following are equivalent:
\begin{list}{\roman{qcounter})~}{\usecounter{qcounter}}
\item
$G_{x}$ is transitive on $\partial \T$, for each $x \in \T$;
\item
$G$ is non-compact and there exists $x \in \T$ such that $G_{x}$ acts transitively on $\partial \T$;
\item
$G$ is non-compact and transitive on $\partial \T$;
\item
$G$ is 2-transitive on $\partial \T$.
\end{list}
\end{proposition}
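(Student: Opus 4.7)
The plan is to establish the equivalence cyclically via $(i) \Rightarrow (ii) \Rightarrow (iii) \Rightarrow (iv) \Rightarrow (i)$, where the step $(ii) \Rightarrow (iii)$ is immediate from the inclusion $G_{x} \leq G$. For $(i) \Rightarrow (ii)$ one only needs to deduce that $G$ is non-compact. If it were compact, by the Bruhat--Tits fixed-point theorem applied to the \cat tree $\T$ it would fix a vertex $y$, so $G = G_{y}$; but then for any other vertex $z \neq y$ the subgroup $G_{z}$ would pointwise fix the geodesic $[y, z]$ and in particular preserve the direction from $z$ toward $y$, forcing it to fix some endpoint of $\bd \T$ and contradicting its transitivity on $\bd \T$.

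For $(iii) \Rightarrow (iv)$ I would begin by noting that a closed non-compact subgroup of $\Aut(\T)$ either fixes a point of $\bd \T$ or contains a hyperbolic element, by the Tits-style classification of individual tree automorphisms combined with a compactness argument on bounded orbits. Transitivity of $G$ on $\bd \T$ excludes the first alternative, so there is a hyperbolic element $h \in G$ with endpoints $\omega_{\pm}$. Conjugating by elements of $G$ that send $\omega_{+}$ onto any prescribed $\xi \in \bd \T$ (available by transitivity on $\bd \T$) yields a hyperbolic $h_{\xi} \in G_{\xi}$, and the plan is then to combine the attracting--repelling dynamics of $\{h_{\xi}^{n}\}$ on $\bd \T \setminus \{\xi\}$ with the global transitivity of $G$ to show that $G_{\xi}$ acts transitively on $\bd \T \setminus \{\xi\}$, which is exactly $2$-transitivity of $G$ on $\bd \T$. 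This dynamical combination of contraction with transitivity is the main obstacle of the proof.

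Finally, for $(iv) \Rightarrow (i)$, fix a vertex $x$ and two endpoints $\xi_{1}, \xi_{2} \in \bd \T$. Since $\T$ is regular of valence $d \geq 3$, one may choose a branch at $x$ containing neither $\xi_{1}$ nor $\xi_{2}$ and pick an endpoint $\eta$ in it, so that $x \in (\xi_{i}, \eta)$ for $i = 1, 2$. By $2$-transitivity pick $g \in G$ with $g \xi_{1} = \xi_{2}$ and $g \eta = \eta$; then $g$ maps the line $(\xi_{1}, \eta)$ isometrically onto $(\xi_{2}, \eta)$ and $gx$ lies on the latter. Either $g$ already preserves the Busemann function at $\eta$, in which case $gx$ and $x$ both lie on the same horosphere at $\eta$ meeting $(\xi_{2}, \eta)$ in the single point $x$, so $gx = x$; otherwise one post-composes $g$ with a translation in $G_{\xi_{2}, \eta}$---which contains a hyperbolic, since by the previous step $G$ admits hyperbolic elements with arbitrary prescribed fixed-point pair---to correct the discrepancy, with the freedom in choosing $\eta$ among the remaining branches at $x$ absorbing any translation-length constraint.
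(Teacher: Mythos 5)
The paper itself gives no proof of this proposition (it is quoted from Fig\`a-Talamanca--Nebbia and from \cite[Lemma~3.1.1]{BM00a}), so your argument has to stand on its own, and as written its hardest implication is missing. For (iii)$\Rightarrow$(iv) you produce, correctly, a hyperbolic element $h_{\xi}\in G_{\xi}$ for every $\xi\in\partial\T$ (Tits' alternative plus conjugation by transitivity), but then you only state a ``plan'' and yourself call the combination of the contracting dynamics with transitivity ``the main obstacle of the proof''. That obstacle is precisely the content of the cited lemma: the north--south dynamics of $h_{\xi}^{n}$ only shows that every $G_{\xi}$-orbit in $\partial\T\setminus\{\xi\}$ \emph{accumulates} at the second fixed point of $h_{\xi}$, i.e.\ it gives information about orbit closures, not about orbits. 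Upgrading this topological statement to genuine set-theoretic transitivity of $G_{\xi}$ on $\partial\T\setminus\{\xi\}$ requires an extra input making essential use of the hypothesis that $G$ is \emph{closed} (a compactness/Baire-type argument on the compact space $\partial\T$, as in the references), and your sketch never invokes closedness at this point. So the key implication is not proven.

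The step (iv)$\Rightarrow$(i) has a gap of the same nature. After arranging $g\xi_{1}=\xi_{2}$, $g\eta=\eta$, you must move $gx$ back to $x$ by an element fixing $\xi_{2}$; your proposed correction is a hyperbolic with axis $(\xi_{2},\eta)$ and translation length equal to the Busemann discrepancy of $g$ at $\eta$. But conjugation only supplies hyperbolics with \emph{prescribed endpoints and a fixed translation length}, so the corrections available in $G_{\xi_{2},\eta}$ have lengths in a subgroup $L\ZZ\subseteq\ZZ$ with possibly $L>1$, and no argument is given that the discrepancy lies in $L\ZZ$ or that ``the freedom in choosing $\eta$'' can arrange this; that claim is essentially equivalent to transitivity of end stabilizers on horospheres, i.e.\ to the proposition itself. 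Finally, a small but real slip in (i)$\Rightarrow$(ii): an element (or group) fixing the two vertices $y$ and $z$ need \emph{not} fix any point of $\partial\T$; the correct contradiction is that $G_{z}\leq G_{y}$ preserves the proper nonempty clopen subset of ends whose ray from $z$ passes through $y$, so it cannot be transitive on $\partial\T$ (and note the Bruhat--Tits fixed point of a compact $G$ may be an edge midpoint, which needs the same one-line adjustment). The last two points are easily repaired; the two gaps above are not.
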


\begin{remark}(See \cite[Prop.~10.2]{FigaNebbia})
One can prove if a closed, non-compact subgroup $G \leq \Aut(\T)$ is transitive on the boundary $\bd \T$, then either $G$ is transitive on the vertices of $\T$, or it has exactly two $G$-vertex-orbits in $\T$.
\end{remark}

The following proposition gives us a slightly different, but equivalent, description of a $2$--transitive action on $ \partial \T$.

\begin{proposition}(See \cite{CaCi, Cio_thesis})
\label{prop::transitivity2}
Let $G$ be a closed, non-compact subgroup of $\Aut(\T)^+$. The following are equivalent:
\begin{list}{\roman{qcounter})~}{\usecounter{qcounter}}
\item
$G$ is strongly transitive on $\T$, where $\T$ is viewed as a $1$-dimensional Euclidean building;
\item
$G$ is $2$--transitive on $\partial \T$;
\item
$G$ has no fixed point in $\partial \T$ and for some point $\xi \in \partial \T$, the stabilizer $G_\xi$ acts co-compactly on $\T$;
\item
$G$ acts co-compactly on $\T$, without a fixed point in $\partial \T$, and there exists a compact open subgroup in $G$ having finitely many orbits on $\partial T$;
\item
$G$ acts co-compactly on $\T$, without a fixed point in $\partial \T$, and every compact open subgroup has finitely many orbits on $\partial T$;
\item
$G$ acts co-compactly on $\T$ and $(G,G_x)$ is a Gelfand pair, where $x$ is a vertex of $\T$.
\end{list}
\end{proposition}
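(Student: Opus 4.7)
I would prove the equivalences via the logical chain (i) $\Leftrightarrow$ (ii), (ii) $\Rightarrow$ (iii) $\Rightarrow$ (iv) $\Rightarrow$ (v) $\Rightarrow$ (ii), and (ii) $\Leftrightarrow$ (vi). Two background tools would be used throughout: Proposition~\ref{prop::transitivity1}, which reduces $2$-transitivity on $\bd\T$ to the transitivity of a single vertex stabilizer $G_x$ on $\bd\T$; and the standard trichotomy for isometric actions on a tree, saying that a non-compact closed subgroup of $\Aut(\T)$ either fixes a vertex, fixes a point of $\bd\T$, or contains a hyperbolic element.

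For (i) $\Leftrightarrow$ (ii), I unpack the one-dimensional building structure: apartments in $\T$ are bi-infinite geodesics (in bijection with unordered pairs of distinct boundary points) and chambers are edges, so strong transitivity reads as transitivity of $G$ on pairs (edge, geodesic containing it). Starting from $2$-transitivity on $\bd\T$, the transitivity of $G_x$ on $\bd\T$ from Proposition~\ref{prop::transitivity1} together with conjugation of a hyperbolic element (obtained from the trichotomy, since the hypotheses preclude both a fixed vertex and a fixed end) produces, along any prescribed bi-infinite geodesic through any prescribed edge, an element of $G$ mapping a given (edge, apartment) pair to another. The converse is immediate since strong transitivity is visibly stronger than transitivity on ordered pairs of distinct boundary points.

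The core of the cyclic chain is an orbit-counting identity: for $G$ transitive on $\bd\T$, the $G_x$-orbits on $\bd\T = G/G_\xi$ are in natural bijection with the double cosets $G_x \backslash G / G_\xi$, which in turn parametrise the $G_\xi$-orbits on the vertex orbit $G \cdot x \subset V(\T)$. Under (ii), Proposition~\ref{prop::transitivity1} gives transitivity of $G_x$ on $\bd\T$, so $G_\xi$ has a single orbit on each $G$-vertex orbit (of which there are at most two, by the remark following Proposition~\ref{prop::transitivity1}); hence $G_\xi$ is co-compact on $\T$, proving (iii). The same identity, read in reverse, yields (iv) once one first extracts that $G$ is transitive on $\bd\T$ from (iii): the co-compactness of $G_\xi$ forces $G_\xi$ to contain a hyperbolic element with $\xi$ as an axis endpoint (a purely horocyclic $G_\xi$ would have infinitely many orbits along the Busemann function), and conjugating yields enough hyperbolic elements to make $G$ transitive on $\bd\T$. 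Then finitely many $G_\xi$-orbits on the discrete set $V(\T)$ translate back into finitely many $G_x$-orbits on $\bd\T$. The step (iv) $\Rightarrow$ (v) is routine from commensurability of compact open subgroups. The hardest step is (v) $\Rightarrow$ (ii): co-compactness plus the absence of a fixed end yields a hyperbolic element $h \in G$ with attracting endpoint $h^+$; north-south dynamics of $h$ forces every $G_x$-orbit on $\bd\T$ to accumulate at $h^+$, while the orbits are closed (as continuous images of the compact group $G_x$) and finite in number, so they must in fact coincide, and Proposition~\ref{prop::transitivity1} delivers (ii).

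The equivalence (ii) $\Leftrightarrow$ (vi) is the Gelfand pair characterization. Under (ii), Proposition~\ref{prop::transitivity1} gives $G_x$-transitivity on each sphere around $x$, which identifies the Hecke algebra $C_c(G_x \backslash G / G_x)$ with a polynomial algebra in the adjacency operator and hence makes it commutative, so $(G, G_x)$ is a Gelfand pair; co-compactness is part of (iii) and hence follows from (ii). Conversely, if $(G, G_x)$ is a Gelfand pair and $G$ acts co-compactly, the spherical function theory of Fig\`a-Talamanca--Nebbia forces $G_x$ to act transitively on spheres around $x$, and in particular on $\bd\T$, whence $2$-transitivity again via Proposition~\ref{prop::transitivity1}. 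The main obstacle I foresee is the dynamical upgrade in (v) $\Rightarrow$ (ii), from ``finitely many $G_x$-orbits'' to ``exactly one $G_x$-orbit'' on $\bd\T$.
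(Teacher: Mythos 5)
The paper itself gives no proof of this proposition: it is quoted with the citation \cite{CaCi, Cio_thesis}, so your attempt has to be measured against the arguments in those references rather than against anything in the text. Your overall skeleton (the cycle through (ii)--(v), plus (i) $\Leftrightarrow$ (ii) and (ii) $\Leftrightarrow$ (vi)) is the natural one, and several of your steps are sound: the double-coset bijection between $G_x$-orbits on $G\cdot\xi$ and $G_\xi$-orbits on $G\cdot x$ does give (ii) $\Rightarrow$ (iii) (using Proposition~\ref{prop::transitivity1} and the remark on the two vertex orbits), (iv) $\Rightarrow$ (v) is indeed routine commensurability of compact open subgroups, and (ii) $\Rightarrow$ (vi) via transitivity on spheres and $d(x,gx)=d(x,g^{-1}x)$ is correct.

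However, the genuinely hard steps are exactly the ones you pass over, and two of your specific arguments do not work as stated. First, for (vi) $\Rightarrow$ (ii) you appeal to ``the spherical function theory of Fig\`a-Talamanca--Nebbia'' to force $G_x$-transitivity on spheres; but the theory in \cite{FigaNebbia} is developed under the standing assumption of (2-)transitivity on $\bd\T$, so this is circular. The implication ``Gelfand pair $\Rightarrow$ strong transitivity'' is precisely the main theorem of \cite{CaCi} and requires its own substantial argument. Second, in (iii) $\Rightarrow$ (iv) the sentence ``conjugating yields enough hyperbolic elements to make $G$ transitive on $\bd\T$'' is unsubstantiated: conjugates of the hyperbolic $h\in G_\xi$ only have axis-endpoints in $G\cdot\xi\cup G\cdot\alpha$ (with $\alpha$ the second end of the axis of $h$), so you are assuming the very transitivity you need; moreover your identification $\bd\T\cong G/G_\xi$ already presupposes it. Third, in (v) $\Rightarrow$ (ii) the claim that north--south dynamics forces \emph{every} $G_x$-orbit to accumulate at $h^{+}$ is false as written, since the iterates $h^{n}$ do not lie in $G_x$ and hence do not move points within a $G_x$-orbit. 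The standard repair is different: finitely many compact $G_x$-orbits covering $\bd\T$ are all clopen, so the orbit $O\ni h^{+}$ is a neighbourhood of $h^{+}$; then for every $\eta\neq h^{-}$ one has $h^{n}(\eta)\in O$ for large $n$, giving $\bd\T\setminus\{h^{-}\}\subseteq G\cdot h^{+}$, and the same argument with $h^{-1}$ shows $G$ is transitive on $\bd\T$, after which Proposition~\ref{prop::transitivity1} upgrades this to 2-transitivity. Finally, in (ii) $\Rightarrow$ (i) you still owe the transitivity of the stabilizer of an apartment on its chambers: for a type-preserving group this needs a translation of length $2$ along the prescribed line together with a vertex reflection (or reflections at adjacent vertices), not merely ``a hyperbolic element'' obtained from the trichotomy, and producing such elements from 2-transitivity is itself a nontrivial (if standard) argument.
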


\begin{convention}
\label{not::general_notation}
Let $F$ be primitive and fix a coloring $\iota$ of $\T$. From now on and to simplify the notation, we set $\G:= U(F)^+$.
\end{convention}

When $F$ is primitive but not $2$--transitive, the universal group $\G$ still enjoys some of the properties of closed, non-compact subgroups of $\Aut(\T)$ that act $2$--transitively on the boundary $\bd \T$. Recall  \cite[Remark 2.1, Lemma 2.2]{Ci_a}.

\begin{remark}
\label{rem::existence_hyp_elements}
As $F$ is primitive, given an edge $e' \in E(T_d) $ at odd distance from $e$, one can construct, using the definition of $\G$, a hyperbolic element in $\G$ translating $e$ to $e'$. Moreover, every hyperbolic element in $\G$ has even translation length, as $\G$ has only type-preserving elements.
\end{remark}
 
For a vertex $x \in T_d$ and en edge $e$ in the start of $x$, set $T_{x,e}$ be the half-tree of $T_d$ emanating from the vertex $x$ and containing the edge $e$.

\begin{lemma}($KA^{+}K$ decomposition)
\label{lem::KAK_decomposition} Let $x \in T_d$ and $e$ an edge in the start of $x$. Let $F$ be primitive and $K:= \G_x$. Then $\G$ admits a $KA^{+}K$ decomposition, where $$A^{+}:=\{\gamma \in \G \; \vert \; \gamma \text{ hyperbolic, }  e \subset \Min(\gamma), \; \gamma(e) \subset T_{x,e} \} \cup \{\id\}.$$
\end{lemma}

\begin{proof}
Let $g \in \G$. If $g(x)=x$, then $g \in K$. If not, then $g(x) \neq x$. Consider the geodesic segment $[x, g(x)]$ in $T_d$ and denote by $e_1$ the edge of the star of $x$ that belongs to $[x, g(x)]$. Note $[x, g(x)]$ has even length and there exists $k \in K$ such that $k(e_1)=e$; therefore, $kg(x) \in T_{x,e}$. Then, by Remark~\ref{rem::existence_hyp_elements}, there is a hyperbolic element  $\gamma \in \G$ of translation length equal to the length of $[x, g(x)]$, with $e \subset \Min(\gamma)$, that translates the edge $e$ inside $T_{x,e}$ and such that $\gamma(x)=kg(x)$; thus $\gamma^{-1}kg \in K$. Note, the $KA^{+}K$ decomposition of an element $g \in \G$ is not unique.
\end{proof}

Note, when $F$ is $2$-transitive, the $K$-double quotient $ K \backslash A^{+} / K$ equals $\{a^n \; \vert \; n \geq 0\}$, where $a \in \G$ is a hyperbolic element of translation length $2$, with $e \subset \Min(\gamma)$, and having $a(e)$ in $T_{x,e}$. If $F$ is not $2$-transitive, the set $K$-double quotient $ K \backslash A^{+} / K$ is a very big non-abelian sub-semi-group.

%When $F$ is primitive but not $2$--transitive, the universal group $\G$ still has some of the properties of closed, non-compact subgroups of $\Aut(\T)$ that act $2$--transitively on the boundary $\bd \T$, see for example Ciobotaru~\cite[Lemma 2.6]{Ci_a}.

%\begin{remark}
%\label{rem::existence_hyp_elements}
%As $F$ is primitive, given an edge $e' \in V(\T) $ at odd distance from $e$, one can construct, using the definition of the group, a hyperbolic element in $\G$ translating $e$ to $e'$. Moreover, every hyperbolic element in $\G$ has even translation length. 
%\end{remark}

\section{New characterizations of $2$--transitivity}
\label{sec::new_prop}

In addition to property \ref{diff_properties_3}) from Section \ref{sec::def_prop}, the next propositions provide new  characterizations of the $2$--transitivity of the $\G$-action on the boundary $\bd \T$. These characterizations are expressed  in terms of the subgroups $\G_\xi$ and the existence of hyperbolic elements in $\G_\xi$, where $\xi \in \bd \T$.

\begin{proposition}
\label{lem::hyp_ends}
$\G$ is transitive on the boundary $\bd \T$ if and only if for every $\xi \in \bd \T$, $\G_{\xi}$ contains a hyperbolic element.
\end{proposition}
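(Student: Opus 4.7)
The plan is to dispatch the two directions separately. For the easy direction $(\Rightarrow)$, Remark~\ref{rem::existence_hyp_elements} already exhibits a hyperbolic $\gamma_0 \in \G$; let $\xi_0$ be one endpoint of its axis. Given any $\xi \in \bd\T$, the assumed transitivity furnishes $g \in \G$ with $g(\xi_0) = \xi$, and then the conjugate $g\gamma_0 g^{-1}$ is hyperbolic with $\xi$ as one endpoint of its axis, hence is a hyperbolic element of $\G_\xi$.

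For the substantive direction $(\Leftarrow)$, I would argue by contradiction. Combining items (iii) and (iv) of Proposition~\ref{prop::transitivity1} with property~\ref{diff_properties_3}) of Section~\ref{sec::def_prop}, transitivity of $\G$ on $\bd\T$ is equivalent to $F$ being $2$-transitive on $\{1,\dots,d\}$, so it suffices to show that the hypothesis forces $F$ to be $2$-transitive. Suppose instead that $F$ is primitive but not $2$-transitive. Then the $F$-action on ordered pairs of distinct colors has at least two orbits, and for each fixed first color $c_0$ the assignment sending $c \neq c_0$ to the $F$-orbit of $(c_0,c)$ realizes a bijection between the $F_{c_0}$-orbits on $\{1,\dots,d\}\setminus\{c_0\}$ and the $F$-orbits on ordered pairs of distinct colors (using that $F$ is transitive). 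Thus at every color one has at least two choices of orbit-label for the next pair, and one may inductively build a ray $(x_0,x_1,x_2,\dots)$ in $\T$, with edges $e_n=\{x_{n-1},x_n\}$, whose sequence of labels $O_n$ (defined as the $F$-orbit of $(\iota(e_n),\iota(e_{n+1}))$) is \emph{not} eventually periodic; let $\xi \in \bd\T$ be the end this ray converges to.

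Now suppose for contradiction that $\delta \in \G_\xi$ is hyperbolic with translation length $\ell$. Its axis is a bi-infinite geodesic with $\xi$ as one endpoint, so it coincides with the constructed ray past some index $N$, and on that tail $\delta$ acts as a shift by $\ell$. Since $\delta \in U(F)^+$, the local permutation $\sigma_i := \iota \circ \delta \circ (\iota\vert_{E(x_i)})^{-1}$ lies in $F$ for every vertex $x_i$, and at $x_i$ with $i \geq N$ it sends $\iota(e_i) \mapsto \iota(e_{i+\ell})$ and $\iota(e_{i+1}) \mapsto \iota(e_{i+\ell+1})$; hence $O_i = O_{i+\ell}$ for every $i \geq N$, contradicting the construction of the label sequence. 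So no such $\delta$ exists, contradicting the hypothesis, which forces $F$ to be $2$-transitive as required.

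The main obstacle is the construction of the non-eventually-periodic ray. One must first establish the bijection between $F_{c_0}$-orbits on the remaining colors and $F$-orbits on ordered pairs (which uses the transitivity of $F$ and is what turns a desired label sequence into a sequence of actual vertex choices), and then exhibit an aperiodic sequence over the at-least-two available labels; both steps are routine in isolation but require some care to fit together cleanly. Everything else amounts to unraveling the definition of $U(F)^+$ via local permutations and using that a hyperbolic element shifts its axis by its translation length.
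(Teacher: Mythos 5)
Your proposal is correct and takes essentially the same approach as the paper: for the non-trivial direction, both arguments construct a geodesic ray converging to an end $\xi$ whose pattern of consecutive edge-colors is not eventually periodic (the paper uses the explicit string $(12)(13)(12)^{2}(13)\cdots$ together with the choice that $F_{1}$ contains no element sending $2$ to $3$, while you use $F$-orbit labels of ordered color pairs), and then note that a hyperbolic element of $\G_{\xi}$ would translate along this ray and, since its local permutations lie in $F$, force the label pattern to be eventually periodic --- a contradiction. The only cosmetic differences are your more abstract labeling, your conjugation argument for the easy direction (versus the paper's appeal to Propositions~\ref{prop::transitivity1} and~\ref{prop::transitivity2}), and the small point that one should replace $\delta$ by $\delta^{-1}$ when $\xi$ is the repelling end of the axis, which does not affect the conclusion $O_{i}=O_{i+\ell}$ for large $i$.
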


\begin{proof}
`$\Rightarrow$'  If  $\G$ is transitive on the boundary $\bd \T$ then, by \cite[Prop. 3.4]{Ti70}, $\G$ admits hyperbolic elements; so the implication is easy and follows from Propositions~\ref{prop::transitivity1} and~\ref{prop::transitivity2}.

`$\Leftarrow$' Suppose, for every $\xi \in \bd \T$ the group $\G_{\xi}$ contains a hyperbolic element. We want to show  $F$ is $2$--transitive, which by property \ref{diff_properties_3}) from Section \ref{sec::def_prop} is equivalent to the transitivity of $\G$ on the boundary $\bd \T$.

Assume  $F$ is not $2$--transitive. Therefore, there exists a color, say $1$, such that $F_{1}$ is not transitive on $\{ 2,...,d \}$ (otherwise $F$ would be $2$--transitive). Moreover, without loss of generality, we can suppose  $F_{1}$ does not contain any element mapping $2$ to $3$.  

Consider a vertex $v \in \T$ and starting from this vertex we want to construct a particular geodesic ray $[v, \eta )$, with $\eta \in \bd \T$. We enumerate its vertices by $\{v_{i}\}_{i \geq 0}$, such that $v_{0}=v$. We construct the geodesic ray $[v, \eta )$ by choosing the vertices $\{v_i\}_{i \geq 0}$ such that the successive colors of the edges along the path $[v, \eta )$ are: $$(12)(13)(12)^{2}(13)(12)^{3}(13)\cdots (12)^{m}(13)(12)^{m+1}(13)\cdots,$$where $(12)^{m}$ denotes the geodesic segment of length $2m$ colored successively with the pair of ordered colors $(1,2)$. Note,  the choice of the vertices $\{v_i\}_{i \geq 0}$ with the above prescribed coloration is unique. Moreover, it determines a unique end $\eta \in \bd \T$ and thus, a unique geodesic ray $[v, \eta )$.

By hypothesis, $\G_{\eta}$ contains at least one hyperbolic element. Take one of those and denote it by $b$; consider  its translation length is $2N$ and that its attracting end is $\eta$. Notice there exists a vertex $v_{b}$ on the geodesic ray $[v,\eta)$ such that for every $v \in [v_b,\eta)$ we have $b(v) \in [v_b,\eta)$. Moreover, if we travel deep enough on the geodesic ray $[v_b,\eta)$ we find a geodesic segment whose coloring is the following string $(12)^{N}(13)(12)^{N}$. Let $x$ be the midpoint of this geodesic segment. The two edges in $[v_b,\eta)$ emanating from $x$ have colors $1$ and $3$, where the two edges in $[v_b,\eta)$ emanating from $b(x)$ have colors $1$ and $2$. We obtained  $b$ sends the pair of ordered colors $(1,2)$ into the pair of ordered colors $(1,3)$. This contradicts  $F_{1}$ does not contain any element mapping $2$ to $3$. The proposition stands proven.
\end{proof}

\begin{proposition}
\label{lem::second_trans_hyp}
Let $F$ be transitive and generated by its points stabilizers. Then $\G$ acts transitively on the boundary $\bd \T$ if and only if every hyperbolic element of $\G$ is a power of a hyperbolic element of translation length two.
\end{proposition}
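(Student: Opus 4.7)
The plan proceeds in two directions, first noting that $\G$ is transitive on $\bd \T$ if and only if $F$ is $2$-transitive (by Proposition~\ref{prop::transitivity1} combined with property~\ref{diff_properties_3} of Section~\ref{sec::def_prop}). The equivalence I have to establish is therefore: $F$ is $2$-transitive if and only if every hyperbolic element of $\G$ is a power of one of translation length two.

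For the forward direction, I will exploit strong transitivity of $\G$ on $\T$ (available via Proposition~\ref{prop::transitivity2}) to produce, for any hyperbolic $h \in \G$ with axis $\ell$ and translation length $2k$ (even by Remark~\ref{rem::existence_hyp_elements}), an auxiliary element $c' \in \G$ of translation length two sharing the axis $\ell$ and the attracting endpoint of $h$. The product $r := h(c')^{-k}$ then lies in the pointwise stabilizer $\G_{(\ell)}$. My next step will be to iterate Tits' independence along the edges of $\ell$ to obtain the internal direct product decomposition $\G_{(\ell)} = \prod_{i \in \ZZ} \G_{T_i}$, where $T_i$ is the subtree consisting of the vertex $x_i$ of $\ell$ together with the $d - 2$ branches at $x_i$ off $\ell$, and $\G_{T_i}$ collects the elements of $\G$ supported on $T_i \setminus \{x_i\}$. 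Writing $s = \prod_i s_i$, $r = \prod_i r_i$ in this decomposition and expanding
\begin{equation*}
(c' s)^k \;=\; (c')^k \cdot (c')^{-(k-1)} s (c')^{k-1} \cdots (c')^{-1} s (c') \cdot s,
\end{equation*}
the equation $(c' s)^k = h$ decouples into a family of equations indexed by $m \in \ZZ$, each taking place inside a single $\G_{T_m}$. I will solve them by cascading: prescribe $s_0, s_1, \ldots, s_{2k-3}$ to be the identity, then let the equation labelled $m$ determine $s_{m+2k-2}$ for $m \geq 0$ and $s_m$ for $m < 0$. The resulting $c := c' s$ lies in $\G$, has translation length two, and satisfies $c^k = h$.

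For the converse I argue the contrapositive: assuming $F$ is not $2$-transitive, I exhibit a hyperbolic element of $\G$ that fails to be a power of any translation-length-two element. Since $F$ is primitive but not $2$-transitive, one can fix colors $2, 3 \in \{2,\ldots,d\}$ lying in distinct orbits of $F_1$, so that no element of $F_1$ sends $3$ to $2$. I then pick a bi-infinite geodesic $\ell \subset \T$ whose successive edge colors follow the period-four pattern $\ldots, 1, 2, 1, 3, 1, 2, 1, 3, \ldots$; the legality of $\iota$ makes such an $\ell$ available inside $\T$. Declaring every local permutation of the prospective translation to be the identity and extending color-preservingly into each branch off $\ell$ yields a well-defined $h \in \G$, hyperbolic of translation length four with axis $\ell$. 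If one had $h = c^k$ with $c \in \G$ of translation length two, then necessarily $k = \pm 2$ and $c$ would share the axis of $h$; reading off the local action of $c$ at a vertex $x_i$ of $\ell$ whose $\ell$-adjacent colors are $3$ and $1$ would force $F_1$ to contain an element sending $3$ to $2$, contradicting the choice of orbits.

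The main obstacle I anticipate is in the forward direction: the careful justification of the decomposition $\G_{(\ell)} = \prod_i \G_{T_i}$ via iterated Tits' independence along $\ell$, together with the bookkeeping of the conjugation $(c')^{-j}(\,\cdot\,)(c')^{j}$ shifting the $T_i$-index by $2j$, is what makes the cascade solvable term by term in the appropriate $\G_{T_m}$; with these structural facts in hand the equations are solved routinely. On the converse side the only delicacy is in checking that the period-four colored $\ell$ actually supports an element of $\G$ of translation length four, which follows from the identity local action extended color-preservingly into the branches off $\ell$.
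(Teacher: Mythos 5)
Your proposal is correct, but it takes a genuinely different route from the paper's proof in both directions, so let me compare. For the implication ``every hyperbolic is a power of a length-two element $\Rightarrow$ boundary-transitive'', the paper also works along a bi-infinite line coloured periodically $(1,2,1,3)$ and derives the same contradiction with the forbidden colour transition, but it never produces a hyperbolic element having that line as its exact axis: it uses edge-transitivity (hence cocompactness) of $\G$ together with \cite[Prop.~2.9]{CaCi} to find a hyperbolic element whose axis merely overlaps the line in a long segment, and applies the hypothesis to that element. Your explicit colour-preserving translation of length four, built inside $U(\id)\cap\Aut(\T)^{+}\subseteq \G$ via \cite[Prop.~57]{Amann}, is more elementary and self-contained; the only inaccuracy is that you quote the hypothesis as ``$F$ primitive'', whereas the proposition assumes $F$ transitive and generated by its point stabilizers (only the failure of $2$-transitivity is actually used, so nothing breaks), and you should say explicitly that when $k=-2$ you run the colour-reading argument on $c^{-1}$. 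For the other implication the paper is laconic, asserting that it ``follows from Propositions~\ref{prop::transitivity1} and~\ref{prop::transitivity2}'', while you actually construct a $k$-th root: a length-two translation $c'$ along the axis obtained from strong transitivity, and then a correction $s$ fixing the axis pointwise, solved factor by factor; this supplies detail the paper leaves implicit. The one step you must prove or cite carefully is the decomposition of the pointwise fixator of the whole line as the product of the branch fixators at its vertices: Definition~\ref{def::Tits_prop} only gives independence across a single edge, and the version for an infinite path requires iterating over finite subpaths and passing to the limit using closedness of $\G$ (this is Tits' property for paths, cf.\ \cite{Ti70} and \cite{Amann}); likewise the infinite product $s=\prod_i s_i$ defines an element of $\G$ because the $s_i$ have pairwise disjoint supports and $\G$ is closed. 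Finally, introduce the unknown $s$ (the element with $c=c's$ and $c^k=h$) before decomposing it; with these points written out, your argument is complete and in fact proves more explicitly than the paper does in the forward direction.
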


\begin{proof}
The implication `$\Rightarrow$' follows by applying Propositions~\ref{prop::transitivity1} and~\ref{prop::transitivity2}.

Consider the reverse implication `$\Leftarrow$'. Suppose moreover that $\G$ is not transitive on the boundary $\bd \T$. This is equivalent to $F$ not being $2$--transitive. Therefore, there exists a color, say $1$, such that $F_1$ is not transitive on $\{2,\cdots, d\}$. Moreover, without loss of generality, we can assume  $F$ does not contain any element sending the pair of ordered colors $(1,2)$ to the pair of ordered colors $(1,3)$. 

Construct in $\T$ a bi-infinite geodesic line colored using only the array of ordered colors $(1,2,1,3)$ and denote it by $\ell$. As $F$ is transitive and generated by its points stabilizers, the group $\G$ is edge-transitive on $\T$; therefore, $\G$ acts co-compactly on $\T$. Apply then \cite[Proposition~2.9]{CaCi} to the geodesic line $\ell$. There exists thus a hyperbolic element $h \in \G$ such that its translation axis intersects $\ell$ into a geodesic segment of a big length. By hypothesis $h$ is a power of a hyperbolic element of translation length $2$. This implies  there exists an element  in $\G$ such that the pair of ordered colors $(1,2)$ is sent into the pair of ordered colors $(1,3)$. This gives an element of $F$ sending $(1,2)$ to $(1,3)$ which is a contradiction. The conclusion follows. 
\end{proof}

\section{Around the Howe--Moore property}
\label{sec::H-M-prop}

Recall the following definition.

\begin{definition}
\label{def::U_alpha}
Let $G$ be a locally compact group and let $\alpha=\{g_{n}\}_{n>0}$ be a sequence of elements in $G$. Corresponding to $\alpha$ we define the set:
\[ 
U^{+}_{\alpha}:= \{ g \in G \; | \; \lim\limits_{n \to \infty} g_{n}^{-1}gg_{n}=e \}. 
\]

Note $U^{+}_{\alpha}$ is a subgroup of $G$. It is called the \textbf{contraction group} corresponding to $\alpha$. Because it does not need to be closed in general, we denote by $N^{+}_{\alpha}$ the closed subgroup $\overline{U^{+}_{\alpha}}$. In the same way, but using $g_{n}gg_{n}^{-1}$ we define $U^{-}_{\alpha}$ and $N^{-}_{\alpha}$.

When $g_n=a^{n}$, for every $n >0$, and for some $a \in G$, we simply denote $U^{+}_{\alpha}$ by $U^{+}_{a}$.
\end{definition} 

For example, the next easy lemma describes the contraction group of a sequence $\alpha=\{g_n\}_{n >0} \subset \Aut(T)$ that enjoys particular properties. 

\begin{lemma}
\label{lem::contr_group}
Let $T$ be a locally finite infinite tree and $\alpha=\{g_n\}_{n >0} \subset G \leq  \Aut(T)$. Assume there exist $x \in T$ and $\xi \in \bd T$ with the property that $g_n(x) \to \xi$ when $n \to \infty$, the limit being considered with respect to the cone topology on $T \cup \bd T$. Then every element $g \in G \leq \Aut(T)$ that fixes pointwise the $x$-rooted half-tree of $T$ containing $\xi$ in its boundary, is an element of $U^{+}_{\alpha}$.
\end{lemma}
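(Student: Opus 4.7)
The plan is to unfold what $g \in U^{+}_{\alpha}$ really says in this topology and reduce to a small piece of tree geometry. The group $\Aut(T)$ carries the topology of pointwise convergence on vertices, so $g_n^{-1} g g_n \to e$ is equivalent to the following: for every vertex $y \in T$ the equality $g_n^{-1} g g_n(y) = y$ holds for all large enough $n$, i.e.\ $g_n(y)$ is eventually fixed by $g$. Writing $H$ for the half-tree emanating from $x$ with $\xi$ in its boundary, and using that $g$ fixes $H$ pointwise, the problem reduces to showing that for each vertex $y \in T$ one has $g_n(y) \in H$ for all sufficiently large $n$.

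To prove this I would fix $y$, set $d := \dist_T(x,y)$, and exploit that $g_n(x) \to \xi$ in the cone topology, which translates to the statement that for every vertex $v$ on the ray $[x,\xi)$, the geodesic $[x, g_n(x)]$ eventually passes through $v$. Applied to the vertex $v_{d+1}$ on $[x,\xi)$ at distance $d+1$ from $x$, this yields, for all large $n$, that $v_{d+1} \in [x, g_n(x)]$; in particular $\dist_T(x, g_n(x)) \ge d+1$ and the initial edge of $[x, g_n(x)]$ coincides with the initial edge $e_0$ of $[x,\xi)$, which is precisely the unique edge of $H$ at $x$.

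The final step is a standard median argument in a tree. Let $p$ be the median of the three vertices $x$, $g_n(x)$, $g_n(y)$, so that $[x, g_n(y)] = [x, p] \cup [p, g_n(y)]$ and $[g_n(x), g_n(y)] = [g_n(x), p] \cup [p, g_n(y)]$. Since $g_n$ is an isometry, $\dist_T(g_n(x), g_n(y)) = d$, so $\dist_T(g_n(x), p) \le d$ and hence $\dist_T(x, p) \ge (d+1) - d = 1$. Thus $p \ne x$ and $[x, p]$ is a non-trivial initial segment of $[x, g_n(x)]$, which by the previous paragraph begins with $e_0$. Consequently $[x, g_n(y)]$ begins with $e_0$ as well, so $g_n(y) \in H$, as required.

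The argument is elementary and I expect no serious obstacle; the only thing to keep honest is the quantitative link between ``$g_n(x)$ is close to $\xi$ in the cone topology'' and ``$g_n$ pushes the whole combinatorial ball of radius $d$ around $x$ into $H$''. The choice $k = d+1$ together with the median computation is precisely what makes this explicit, and so the lemma follows with no appeal beyond the definitions of the cone topology, of pointwise convergence in $\Aut(T)$, and of the contraction group.
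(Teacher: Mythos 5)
Your proof is correct and takes essentially the same route as the paper's: unfold $g_n^{-1}gg_n \to e$ as eventual pointwise fixing of vertices, conjugate this into the condition that $g$ fixes the $g_n$-images of bounded sets, and conclude from $g_n(x)\to\xi$ together with the pointwise fixing of the half-tree. The only difference is cosmetic: the paper states the final step with balls $B(g_n(x),r)$ and asserts the containment in the half-tree directly, whereas your choice of $v_{d+1}$ and the median computation spell that containment out explicitly.
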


\begin{proof}
Let $g  \in G \leq \Aut(T)$ fixing pointwise the $x$-rooted half-tree $T_{x,\xi} \subset T$ that contains $\xi$ in its boundary. We have to prove  $\lim\limits_{n \to \infty} g_{n}^{-1}gg_{n}=e$. This is equivalent to showing that for every $r >0$, there exists $N_r >0$ such that $g_{n}^{-1}gg_{n}(B(x, r))=B(x,r)$ pointwise, for every $n >N_r$, where $B(x, r) \subset T$ is the ball centered in $x$ and of radius $r$. The latter condition is translated as $g(B(g_n(x), r))= B(g_n(x), r)$ pointwise, for every $n > N_r$. This equality is true by hypothesis, as $g_n(x) \to \xi$ and $g$ fixes pointwise the half-tree $T_{x, \xi}$.  The conclusion follows.
\end{proof}

Note, the sequence $\{g_n=a^{n}\}_{n>0}$, where $a \in \Aut(T)$ is a hyperbolic element, satisfies the hypotheses of Lemma~\ref{lem::contr_group};  the point $\xi \in \bd T$ is the attracting end of $a$ and $x$ a vertex of the translation axis of $a$. In this particular case, one can prove more when considering the group $\G$. The next lemma emphasizes the importance of contractions groups.
 
\begin{lemma}
\label{lem::hyp_C_0}
Let $F$ be primitive. Let $a$ be a hyperbolic element in $\G$. Then $\G= \left\langle U^{+}_{a},U^{-}_{a}\right\rangle$. 
In particular, for any unitary representation $(\pi, \mathcal{H})$ of $\G$, without non-zero $\G$-invariant vectors and where $\mathcal{H}$ is a separable complex Hilbert space, and for any $(v,w)$-matrix coefficient of $(\pi, \mathcal{H})$, with $v,w \in \mathcal{H}$, we have  $\lim\limits_{n \to \infty} \vert c_{v,w}(a^n) \vert =0$.

%and for every sequence of natural numbers $\{ n_{k} \}_{k \geq 0} \rightarrow \infty$, the sequence $ \left\langle \pi (a^{n_{k}})v, w\right\rangle  \xrightarrow[k \to\infty]{} 0$, for all $v, w \in \mathcal{H} \setminus \{0\}$.
\end{lemma}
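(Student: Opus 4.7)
The plan is to establish the generation statement $\G=\langle U^+_a,U^-_a\rangle$ first, then deduce the $C_0$ matrix-coefficient assertion via a standard Mautner/weak-compactness contradiction.

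For the generation statement, I would begin by applying Tits' independence to an edge $e$ on the translation axis of $a$. Let $\xi^+, \xi^-\in\bd\T$ denote the attracting and repelling endpoints of $a$, and let $x,y$ be the endpoints of $e$ with $y$ on the $\xi^+$ side. Then $T^e_y$ is precisely the half-tree emanating from $y$ containing $\xi^+$ in its boundary, and $a^n(y)\to\xi^+$, so Lemma~\ref{lem::contr_group} gives $\G_{T^e_y}\subseteq U^+_a$; symmetrically $\G_{T^e_x}\subseteq U^-_a$. Tits' independence then yields $\G_e=\G_{T^e_x}\G_{T^e_y}\subseteq\langle U^+_a,U^-_a\rangle$, uniformly for every axis-edge $e$.

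To propagate from axis-edge stabilizers to the whole of $\G$, I use that $F$ primitive but not cyclic of prime order is generated by its point stabilizers (property~\ref{diff_properties_4}): the local actions of the two axis edge-stabilizers at an axis-vertex $v$ are the corresponding point stabilizers of $F$, whose interplay under primitivity produces the full local group $F$. Since the kernel of $\G_v\twoheadrightarrow F$ is contained in each axis edge-stabilizer at $v$, this gives $\G_v\subseteq\langle U^+_a,U^-_a\rangle$ for every axis-vertex $v$. An induction on the distance to the axis then places every vertex-stabilizer inside $\langle U^+_a,U^-_a\rangle$: at a vertex $v'$ at distance $k+1$, its unique neighbour $u'$ at distance $k$ satisfies $\G_{u'}\subseteq\langle U^+_a,U^-_a\rangle$ by induction, and the $F$-transitive action of $\G_{u'}$ on the star of $u'$ furnishes an element sending $v'$ to a neighbour at distance $k-1$, thereby conjugating $\G_{v'}$ into an already-known vertex-stabilizer. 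Because $\G=U(F)^+$ is generated by its edge-stabilizers, hence by its vertex-stabilizers, this yields $\G\subseteq\langle U^+_a,U^-_a\rangle$, and the reverse inclusion is tautological.

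For the $C_0$ statement, suppose for contradiction that some matrix coefficient $\langle\pi(a^n)v,w\rangle$ does not vanish at infinity. Extracting a subsequence provides a non-zero weak limit $\pi(a^{n_k})v\rightharpoonup v_0$; the standard Mautner identity $\pi(u)\pi(a^{n_k})v=\pi(a^{n_k})\pi(a^{-n_k}ua^{n_k})v$ together with $a^{-n_k}ua^{n_k}\to e$ for $u\in U^+_a$ then shows $v_0$ is $U^+_a$-invariant. Consider the sequence $\{\pi(a^{-m})v_0\}_{m>0}$: since $a$ normalises $U^+_a$, every term lies in the closed $U^+_a$-invariant subspace, so any weak cluster point remains $U^+_a$-invariant, and Mautner applied to $\{a^{-m}\}$ makes it $U^-_a$-invariant as well. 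By the generation statement, such a cluster point is $\G$-invariant, hence zero by assumption. The bounded sequence $\{\pi(a^{-m})v_0\}$ therefore converges weakly to $0$, and
\[
\|v_0\|^2 = \lim_{k\to\infty}\langle\pi(a^{n_k})v,v_0\rangle = \lim_{k\to\infty}\langle v,\pi(a^{-n_k})v_0\rangle = 0\eqcomma
\]
contradicting $v_0\neq 0$.

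The principal obstacle I expect is in the propagation step of Part~1, namely verifying that at every axis-vertex the two axis edge-stabilizers already suffice to locally generate $F$: this is precisely where primitivity and the non-cyclic-of-prime-order hypothesis on $F$ enter substantively, through the maximality of point stabilizers in a primitive group and the fact that they are non-trivial. The Mautner part of the argument, by contrast, is a clean consequence of the generation statement combined with weak-compactness in the Hilbert space.
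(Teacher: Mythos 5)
Your argument is correct, but it reaches the generation statement $\G=\langle U^{+}_{a},U^{-}_{a}\rangle$ by a genuinely different route than the paper. Both proofs begin the same way: Tits' independence at an edge $e$ of the translation axis together with Lemma~\ref{lem::contr_group} gives $\G_{e}=\G_{T_1}\G_{T_2}\subseteq\langle U^{+}_{a},U^{-}_{a}\rangle$, where $T_1,T_2$ are the two half-trees at $e$. From there the paper argues softly and globally: it shows $U^{\pm}_{a}\leq\G^{0}_{\xi_{\pm}}$ and $\G^{0}_{\xi_{\pm}}\leq\langle U^{+}_{a},U^{-}_{a}\rangle$, so that $\langle U^{+}_{a},U^{-}_{a}\rangle$ is open (it contains $\G_e$) and non-compact (via \cite[Lemma~2.6]{CaMe11}, see Lemma~\ref{lem::Stab_end_notcompact}), and then invokes the Caprace--De Medts dichotomy \cite[Prop.~4.1]{CaMe11} that for $F$ primitive every proper open subgroup of $\G$ is compact, forcing equality. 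You instead argue locally and combinatorially: the edge-fixators of the two axis edges at an axis vertex surject onto the corresponding point stabilizers of $F$, and these generate $F$ because in a primitive, non-regular group point stabilizers are maximal and pairwise distinct (the ``same stabilizer'' relation would otherwise yield a nontrivial block system); hence every axis-vertex stabilizer lies in $\langle U^{+}_{a},U^{-}_{a}\rangle$, and your induction on distance to the axis plus the fact that $U(F)^{+}$ is generated by vertex (indeed edge) stabilizers finishes. Your route is more elementary and self-contained, using primitivity only at the level of the finite group $F$, at the cost of more bookkeeping (local surjectivity of edge-fixators, transitivity of the local action, the base case of the induction); the paper's route is shorter but leans on the open-compact subgroup characterization of primitivity. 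For the $C_0$ assertion the paper simply cites \cite[Lemmas~2.9 and~3.1]{Cio}, whereas you reprove it: your Mautner argument --- the weak limit $v_0$ is $U^{+}_{a}$-invariant, the translates $\pi(a^{-m})v_0$ stay in the weakly closed $U^{+}_{a}$-fixed subspace because $a$ normalizes $U^{+}_{a}$, every weak cluster point is then invariant under $\langle U^{+}_{a},U^{-}_{a}\rangle=\G$ and hence zero, and $\Vert v_0\Vert^{2}=\lim_{k}\langle v,\pi(a^{-n_k})v_0\rangle=0$ gives the contradiction --- is correct and is essentially the content of the cited lemmas.
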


\begin{proof}
Denote  by $\ell$ the translation axis of the hyperbolic element $a$ and its repelling and attracting unique ends by $\xi_{-}$ and respectively, $\xi_{+}$.

Let $g$ be an element in $U^{+}_{a}$, which by definition has the property $\lim\limits_{n \to \infty} a^{-n}ga^{n}=e$. One deduces $g(\xi_{+})=\xi_+$ and that $g$ fixes at least one vertex of $\T$. Therefore $g \in \G^{0}_{\xi_+}:=\{ h \in \G \; \vert \; h (\xi_+)=\xi_+ \text{ and } h \text{ is elliptic} \} $. We conclude that $U^{+}_{a} \leq \G^{0}_{\xi_+}$. In the same way  $U^{-}_{a} \leq \G^{0}_{\xi_-}$.

Moreover, we claim  $\G^{0}_{\xi_+} \leq  \left\langle U^{+}_{a},U^{-}_{a}\right\rangle $. Indeed, let $g \in \G^{0}_{\xi_+}$ and consider a vertex $x \in \ell$ fixed by $g$. The geodesic ray $[x,\xi_+) \subset \ell$ is thus pointwise fixed by $g$. Let $e$ be an edge in the  interior of  $[x,\xi_+)$. By Tits' independence property we obtain  $g=g_1h_1$, where $g_1$ fixes pointwise the half-tree emanating from the edge $e$ and containing the end $\xi_+$ and $h_1$ fixes pointwise the other half-tree emanating from $e$, which moreover contains the end $\xi_-$. By Lemma~\ref{lem::contr_group} $h_1 \in U^{-}_{a}$ and $g_1 \in U^{+}_{a}$. In particular $\G_{e} \leq  \left\langle U^{+}_{a},U^{-}_{a}\right\rangle$ and our claim follows.  

To conclude, we obtain  $ \G^{0}_{\xi_+}, \G^{0}_{\xi_-} \leq \left\langle U^{+}_{a},U^{-}_{a}\right\rangle$  and $\left\langle U^{+}_{a},U^{-}_{a}\right\rangle$ is an open subgroup of $\G$, because it contains the open subgroup $\G_{e}$. But $\left\langle U^{+}_{a},U^{-}_{a}\right\rangle$ is not compact as otherwise it would be contained in a vertex stabilizer by \cite[Lemma~2.6]{CaMe11}, which is impossible (see also Lemma~\ref{lem::Stab_end_notcompact}). Thus $ \left\langle U^{+}_{a},U^{-}_{a}\right\rangle $ is an open, non-compact subgroup of $\G$. By Caprace--De Medts~\cite[Prop.~4.1]{CaMe11}  the subgroup $F$ is primitive if and only if every proper open subgroup of $\G$ is compact, thus we must have  $ \left\langle U^{+}_{a},U^{-}_{a}\right\rangle =\G$. 

The last part of the lemma follows by applying \cite[Lemma~2.9 and Lemma~3.1]{Cio}.
\end{proof}

In a similar manner as in the proof of Lemma \ref{lem::hyp_C_0}, one can show $\G= \left\langle U^{+}_{a},U^{-}_{b}\right\rangle$, where $a,b \in \G$ are two hyperbolic elements with the attraction end of $a$ different than the repelling end of $b$.

Lemma~\ref{lem::hyp_C_0} motivates the following remark.

\begin{remark}
\label{rem::stab_vect}
Let  $F$ be primitive but not $2-$transitive. Let $(\pi,\mathcal{H})$ be a unitary representation of $\G$ without non-zero $\G$-invariant vectors and where $\mathcal{H}$ is a separable complex Hilbert space. Consider $v \neq 0 \in \mathcal{H}$ and let $\G_{v}:=\{ g \in \G \; \vert \; \pi (g)(v)=v \}$. We record the following two possibilities for the closed subgroup $\G_v$ of $\G$:
\begin{list}{\roman{qcounter})~}{\usecounter{qcounter}}
\item
$\G_{v}$ is compact;
\item
$\G_{v}$ is not compact. In this case, as the unitary representation $(\pi, \mathcal{H})$ is without non-zero $\G$-invariant vectors, by Lemma~\ref{lem::hyp_C_0} $\G_{v}$ does not contain hyperbolic elements. Applying the result of Tits \cite[Prop.~3.4]{Ti70} we conclude $\G_{v}$ is a closed subgroup of the pointwise-stabilizer of an end $\xi \in \bd \T$.  Note, this end $\xi$ is unique. Indeed, if there were at least two different ends $\xi_1, \xi_2 \in \bd \T$ that are stabilized by $\G_{v}$ then $\G_{v}$ would fix a point in the tree $\T$ and so $\G_{v}$ would be compact. 
\end{list}
\end{remark}

Remark \ref{rem::stab_vect} is completed by the following lemma and proposition.

\begin{lemma}
\label{lem::Stab_end_notcompact}
Let $F$ be primitive. Then for every $\xi \in \partial \T$ the subgroup $\G^{0}_{\xi}$ is not compact, where $\G^{0}_{\xi}:=\{ g \in \G \; \vert \; g (\xi)=\xi \text{ and } g \text{ is elliptic} \}$. 
\end{lemma}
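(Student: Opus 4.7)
The plan is to show that the orbit $\G^0_\xi\cdot y_0$ is unbounded for some vertex $y_0\in\T$, which forces $\G^0_\xi$ to be non-compact: any compact subgroup of $\Aut(\T)^+$ is contained in a vertex stabilizer and hence has bounded orbits. I would fix $y_0\in\T$, enumerate the vertices of $[y_0,\xi)$ as $y_0,y_1,y_2,\ldots$, and set $e_i=(y_i,y_{i+1})$ with color $c_i:=\iota(e_i)$. For each $n\geq 1$ let $\T^+_n$ denote the half-tree of $\T$ emanating from $e_n$ that contains $\xi$ in its boundary, and let $\T^-_n$ denote the other half-tree. Any element of the pointwise stabilizer $\G_{\T^+_n}$ fixes the tail $[y_{n+1},\xi)$, hence fixes $\xi$, and is elliptic (it fixes $y_{n+1}$); so $\G_{\T^+_n}\subseteq\G^0_\xi$. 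The task reduces to exhibiting, for each $n$, an element $g_n\in\G_{\T^+_n}$ with $\dist_{\T}(y_0,g_n(y_0))=2n$.

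The key combinatorial input, and the hardest step, is to show that under our standing hypothesis on $F$ the stabilizer $F_c$ fixes \emph{only} the color $c$, for every $c\in\{1,\ldots,d\}$. Set $S:=\{c'\,:\,F_c\text{ fixes }c'\}$. Transitivity of $F$ forces all point stabilizers to have the same order, so any inclusion $F_c\leq F_{c'}$ is an equality; a short check then shows that $\{f(S):f\in F\}$ is an $F$-invariant partition of $\{1,\ldots,d\}$ into sets of size $|S|$. Primitivity makes this partition trivial, so $|S|=1$ or $|S|=d$. The case $|S|=d$ would mean $F_c=\{1\}$, i.e.\ $F$ is regular, which for a primitive group forces $F$ to be cyclic of prime order, excluded by our convention. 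Therefore $S=\{c\}$; in particular $F_{c_n}$ moves $c_{n-1}$ (and $c_{n-1}\neq c_n$, because the legal coloring restricts to a bijection on the star of $y_n$).

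With this in hand, I would pick $f\in F_{c_n}$ with $f(c_{n-1})\neq c_{n-1}$ and construct $g_n$ using Tits' independence. By the definition of $U(F)$ there is $g\in U(F)_{y_n}$ realizing $f$ as its local action at $y_n$; since $f$ fixes $c_n$, $g$ fixes the edge $e_n$ and so lies in $\G=U(F)^+$. Decompose $g=g_+g_-$ with $g_\pm\in\G_{\T^\pm_n}$. The component $g_-$ fixes $\T^-_n$ pointwise and hence fixes the $d-1$ edges of the star of $y_n$ lying in $\T^-_n$; being a permutation of all $d$ edges at $y_n$ it must also fix $e_n$, so its local action at $y_n$ is trivial. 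Consequently $g_n:=g_+\in\G_{\T^+_n}$ still realizes $f$ at $y_n$. Since $f(c_{n-1})\neq c_{n-1}$, the image $g_n(y_{n-1})$ is a neighbor of $y_n$ different from $y_{n-1}$; the geodesics $[y_n,y_0]$ and $[y_n,g_n(y_0)]$ share only $y_n$, and therefore $\dist_{\T}(y_0,g_n(y_0))=2n$. Letting $n\to\infty$ yields an unbounded orbit of $y_0$ under $\G^0_\xi$, completing the proof.
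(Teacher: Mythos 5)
Your proof is correct, and it takes a genuinely different route from the paper's. The paper argues by contradiction: if $\G^{0}_{\xi}$ were compact it would fix a vertex $x$, and Tits' independence applied along $[x,\xi)$ yields nested edge stabilizers and hence equalities $F_j=F_k$ of color stabilizers for two distinct colors $j\neq k$ occurring on the ray; from this one builds the stabilizer of a $(j,k)$-colored line, which is open (it contains an edge stabilizer), non-compact (it contains a hyperbolic element of translation length $2$), and proper, contradicting the Caprace--De Medts characterization that for $F$ primitive every proper open subgroup of $\G$ is compact (\cite[Prop.~4.1]{CaMe11}). You instead argue directly: the elementary block argument (the fixed-point set of $F_c$ is a block, so primitivity together with excluding the regular, i.e.\ cyclic of prime order, case gives $\Fix(F_c)=\{c\}$) guarantees that at each vertex $y_n$ of $[y_0,\xi)$ some $f\in F_{c_n}$ moves $c_{n-1}$, and Tits' independence lets you trim a realization of $f$ at $y_n$ down to an element of the pointwise fixator of the half-tree containing $\xi$, hence an element of $\G^{0}_{\xi}$ displacing $y_0$ by exactly $2n$; unbounded orbits preclude compactness. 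What the paper's route buys is brevity, by outsourcing the group-theoretic content to \cite[Prop.~4.1]{CaMe11}; what yours buys is a constructive, self-contained argument whose use of primitivity is purely at the level of the finite permutation group $F$, and which in fact exhibits elements of arbitrarily large displacement inside the half-tree fixators contained in $\G^{0}_{\xi}$ -- a slightly stronger conclusion. The only step you pass over quickly is the existence of $g\in U(F)_{y_n}$ with prescribed local action $f$; this is standard (Burger--Mozes, see \cite{BM00a}, or \cite{Amann}) and is also used implicitly in the paper, but it deserves an explicit citation rather than ``by the definition''.
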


\begin{proof}
Suppose there is $\xi \in \bd \T$ such that $\G^{0}_{\xi}$ is compact. Using \cite[Lemma~2.6]{CaMe11} there exists a vertex $x \in \T$ such that $\G^{0}_{\xi} \leq  \G_{x}$. Take the geodesic ray $\left[ x, \xi \right)$ and let $e$ be an edge in $\left[ x, \xi \right)$ with $\dist_{\T}(e,x)>1$. Using Tits' independence property of $\G$ we claim for every $g \in \G_{e}$, $g(e')=e'$, where $e'$ is an edge contained in $\left[ x, \xi \right)$ such that $d(e', x) \leq d(e,x)-1$. Indeed, we have $\G_{e}=\G_{T_{e,1}}\G_{T_{e,2}}$, where $T_{e,1}$ and $T_{e,2}$ are the two infinite, disjoint, half-trees of $\T$ rooted, respectively, at the two vertices of $e$ and containing $e$. Therefore, if $g \in \G_{T_{e,1}}$ then $g(e')=e'$, where $T_{e,1}$ is the half-tree not containing the end $\xi$. If $g \in \G_{T_{e,2}}$ then $g \in \G_{\xi} \leq \G_v$ and thus $g(e')=e'$.

We conclude  $\G_{e} \subset \G_{e'}$, for any two edges $e,e'$ of $\left[ x, \xi \right)$ with $\dist_{\T}(e,x)>1$ and $d(e', x) \leq d(e,x)-1$. In particular, if we take $e'$ such that $d(e', x)= d(e,x)-1$ then we obtain $F_{i(e)} \leq F_{i(e')}$, where by $F_{j}$, with $j \in \left\lbrace 1,...,d \right\rbrace$,  we denote the stabilizer of the color $j$ in the permutation subgroup $F$.

Now, as the geodesic ray $\left[ x, \xi \right)$ is $d$-colored, there are at least two different colors on this geodesic ray, say $k\neq j $, for which $F_{j}=F_{k}$.

Choose an edge $e_{1}$ on the axis $[ x, \xi )$ which is colored $j$. Consider the unique bi-infinite geodesic line $\ell$ containing $e_1$ and colored only with $j$ and $k$, alternatively. Now $\G_{\ell}:=\{ g \in \G \; \vert \; g \text{ stabilizes } \ell \}$ contains $\G_{e_{1}}$. This is because if the color $j$ is fixed then also is the color $k$, as $F_{j}=F_{k}$, and vice versa. Therefore, $\G_{\ell}$ is an open subgroup in $\G$.

But $\G_{\ell}$ is not compact because it contains a hyperbolic element of translation length $2$, i.e., whose translation axis is $\ell$. Indeed, observe  the group $U(\id)$ is a subgroup of $U(F)$ and it preserves the fixed coloring $i$ of the tree. In particular, we can construct in $U(\id)$ a hyperbolic element $h$ of translation length $2$ whose translation axis is $\ell$. Moreover, this hyperbolic element $h$ belongs to $\Aut(\T)^{+}$.  As $F$ is primitive, apply \cite[Prop.~57]{Amann}. We obtain  $U(F)^{+}= U(F) \cap \Aut(\T)^{+}$ and so $h \in \G_{\ell} \leq U(F)^{+}$. We conclude  $\G_{\ell}$ is a non-compact, open, proper subgroup of $\G$ contradicting~\cite[Prop.~4.1]{CaMe11}. The conclusion follows.
\end{proof}

\begin{remark}
\label{rem::conv_endpoint}
Recall,  a locally finite tree $T$ is endowed with the cone topology which turns $T \cup \bd T$ into a compact topological space. Let $x \in T$ and take a sequence $\{g_n\}_{n>0} \subset \Aut(T)$ such that $g_n \to \infty$. Therefore $\dist_{T}(x, g_n(x)) \to \infty$. By compactness, we can extract a subsequence $\{g_{n_k}\}_{n_k}$ such that $\{g_{n_k}(x)\}_{n_k}$ converges with respect to the cone topology to an end $\xi \in \bd T$. 
\end{remark}

\begin{proof}[Proof of Proposition \ref{prop::strong_Mautner_1}]
First of all, by Mautner's phenomenon \cite[Ch.~III, Thm.~1.4]{BM} (or \cite{Cio}) the vector $v_{0}$ is  $N^{+}_{\{g_{n}\}_{n}}$-invariant. So $N^{+}_{\{g_{n}\}_{n}} < G_{v_{0}}$.

Consider now a vertex $ x \in T$. As $g_{n} \to \infty$, by Remark \ref{rem::conv_endpoint} we extract a subsequence $\{g_{n_{k}}\}_{n_k>0} \subset \{g_{n}\}_{n>0}$ such that $g_{n_{k}} (x) \to \xi $ in the cone topology of $T \cup \bd T$. 

\medskip
For such an $\{g_{n_k}\}_{n_k>0}$ and $\xi$, we claim  $N^{+}_{\{g_{n_k}\}_{n_k}} < G_{\xi}^{0}$. As $G_{\xi}^{0}$ is a closed subgroup, it is enough to show  $U^{+}_{\{g_{n_k}\}_{n_k}} < G_{\xi}^{0}$. Indeed, let $g \in U^{+}_{\{g_{n_k}\}_{n_k}}$. By definition we have $\lim\limits_{n \to \infty}g_{n_k}^{-1}g g_{n_k}=e$. As $g_{n_k}(x) \to \xi $ in the cone topology, we deduce  $g$ fixes the vertices $g_{n_k}(x)$ and so also the end $\xi$. We conclude  $g \in G_{\xi}^{0}$.

Moreover, we claim  $G_{\xi}^{0} \leq G_{v_{0}}$.  

Indeed, let $g \in G_{ \xi }^{0}$ and we want to prove  $g \in G_{v_{0}}$. Let $ [ x_{0}, \xi )$ be a geodesic ray fixed by $g$ and we index its vertices by $x_{j}$. Take an edge $ [ x_{j}, x_{j+1}]$, with $j \geq 0$ and we have  $g \in G_{ [ x_{j}, x_{j+1} ] }:=\{ h \in G \; \vert \; h [ x_{j}, x_{j+1}]= [ x_{j}, x_{j+1}] \}$. By Tits' independence property $g=t_{j}h_{j}$, where $t_{j} \in G_{T_{x_{j}}}$, $h_{j} \in G_{T_{x_{j+1}}}$, $T_{x_{j}}$ is the half-tree rooted at $x_{j}$ and containing the end $\xi$, and $T_{x_{j+1}}$ is the half-tree rooted at $x_{j+1}$ and not containing the end $\xi$. As $g$ and $t_{j}$ are stabilizing the end $\xi$ we deduce  $h_{j} \in G_{\xi}^{0}$. 

In addition, as $t_{j}$ fixes pointwise the half-tree $T_{x_j}$ containing the end $\xi$ and $g_{n_{k}} (x) \to \xi $, by Lemma \ref{lem::contr_group}  $\lim\limits_{n \to \infty} g_{n_k}^{-1}t_{j}g_{n_k}=e$. Thus $t_{j} \in N^{+}_{\{g_{n_k}\}_{n_k}} < G_{v_{0}}$. 

Remark  $h_{j} \to e$ when $j\to \infty$. Therefore $g h_{k}^{-1}=t_{k} \to g$. Because $G_{v_{0}}$ is a closed subgroup and $t_{k} \in G_{v_{0}}$ we deduce  $g \in G_{v_{0}}$. The claim follows.

\medskip
Let us now prove the last assertion of the proposition by supposing  $G_{v_0}$ is non-compact and does not contain hyperbolic elements. By the result of Tits \cite[Prop.~3.4]{Ti70}  $G_{v_0}$ is contained in the stabilizer $G_{\eta}^{0}$ of an end $\eta \in \bd T$. As $G_{v_0}$ is non-compact, choose a sequence $\{ h_m\}_{m>0} \subset G_{v_0}$ such that $h_m \to  \infty$. Remark  $\{\pi(h_m)(v_0)\}_{m>0}$ weakly converge to $v_0$ and for any vertex $x \in T$ we have  $h_m(x) \to \eta $, with respect to the cone topology of $T \cup \bd T$. Applying the first part of the proposition  $G_{\eta}^{0} \leq G_{v_0}$ and the conclusion follows.
\end{proof}

By Remarks~\ref{rem::stab_vect}  and~\ref{rem::conv_endpoint} we obtain Corollary \ref{cor::end_stab_vector_1}.

Using the results obtained until now, we are now ready to prove the equivalent characterization for the group $\G$ to have the Howe--Moore property given by Theorem \ref{thm::equiv_Howe-Moore_1} from the Introduction. %We stress here that in general, for locally compact groups an `if and only if' condition for the Howe--Moore property is difficult to be achieved. 

\begin{proof}[Proof of Theorem \ref{thm::equiv_Howe-Moore_1}]
Remark, by \cite[Prop.~3.2]{CCL+} and~\cite[Prop.~4.1]{CaMe11}, the primitivity of $F$ is a necessary condition for the Howe--Moore property. Also, by~\cite[Remark~2.5 and Lemma~2.4]{Cio}, it is enough to consider only unitary representations over separable Hilbert spaces.

Moreover, the implication ``$\G$ has Howe--Moore property implies $\G_{v}$ is compact'' is easy and holds in general.  Indeed, if $\G_{v}$ is non-compact then there exists a sequence $\{g_{n}\}_{n} \subset \G_v$ with $g_n \to \infty$. Therefore, the corresponding matrix coefficients $\left\langle \pi(g_{n})(v),v \right\rangle=1$ which is a contradiction.

Consider now the reverse implication.

Suppose there exists a unitary representation $(\pi, \mathcal{H})$ of $\G$, without non-zero $\G$-invariant vectors and where $\mathcal{H}$ is a separable Hilbert space, two non-zero vectors $v, w \in \mathcal{H}$ and a sequence $\{g_{n}\}_{n>0} \subset \G$ such that $g_{n} \to \infty$  and $ \vert \left\langle \pi(g_{n}v, w \right\rangle \vert \nrightarrow 0$. Restricting to a subsequence and without loss of generality we can consider also that $ \{ \pi(g_{n})(v) \}_{n>0}$ weakly converges to $ v_{0} \neq 0 \in \mathcal{H}$. 

Take a vertex $ x \in \T$. As $g_{n} \to \infty$, by Remark \ref{rem::conv_endpoint} we extract a subsequence $\{g_{n_{k}}\}_{n_k>0} \subset \{g_{n}\}_{n>0}$ such that $g_{n_{k}} (x) \to \xi $ in the cone topology of $T \cup \bd T$.  Applying Proposition~\ref{prop::strong_Mautner_1} we obtain  $\G_{\xi}^{0} \leq \G_{v_0}$. By Lemma~\ref{lem::Stab_end_notcompact} we conclude  $\G_{v_0}$ is not compact, which contradicts our hypotheses. The theorem stands proven. 
\end{proof}

\section{The relative Howe--Moore property}
\label{sec::rel_Howe_Moore}

\begin{proof}[Proof of Theorem  \ref{thm::rel_Howe-Moore_1}]
Let $\xi \in \bd \T$ and let $(\pi, \mathcal{H})$ be a unitary representation of $G$ that does not have non-zero $\G_{\xi}^{0}$--invariant vectors. We need to prove  $\pi\vert_{\G_{\xi}^{0}}$ has all its matrix coefficients vanishing at infinity with respect to $\G_{\xi}^{0}$. Suppose this is not the case.  As $\G_{\xi}^{0}$ is not compact by Lemma~\ref{lem::Stab_end_notcompact}, there exists a sequence $\{g_n\}_{n\geq 0} \subset \G_{\xi}^{0}$, with $g_n \to \infty$, and $v,w \in \mathcal{H} \setminus \{0\}$ such that $\vert \left\langle \pi(g_{n}v, w \right\rangle \vert \nrightarrow 0$ when $g_n \to \infty$. Because the set $\{ \pi(g_{n}v\}_{n \geq 0}$ is bounded in the norm of $\mathcal{H}$, there exists $v_0 \in \mathcal{H}$ and a subsequence $\{n_k\}_{k \geq 0}$ such that $\{\pi(g_{n_k}v\}_{k \geq 0}$ weakly converges to $v_0$. In particular $\lim\limits_{n_k \to \infty}  \left\langle \pi(g_{n_k}v, w \right\rangle = \left\langle v_0, w \right\rangle$, thus by the assumption above  $v_0$ is a non-zero vector. Moreover,  $g_{n_k} \to \infty$ and for a fixed vertex $x \in \T$ we also have $g_{n_k}(x) \to \xi$ with respect to the cone topology on $\T$, when $n_k \to \infty$. The latter assertion follows because $\{g_{n_k}\}_{k\geq 0} \subset \G_{\xi}^{0}$ and because $g_{n_k} \to \infty$. As the hypotheses of Proposition~\ref{prop::strong_Mautner_1} are satisfied for the sequence $\{g_{n_k}\}_{k\geq 0} \subset \G_{\xi}^{0} \leq \G$, the fixed vertex $x \in \T$ and the vector $v, v_0$, we conclude  $\G_{\xi}^{0} \leq \G_{v_0}$. Thus $v_0$ is a non-zero vector of $\mathcal{H}$ that is invariant under $\G_{\xi}^{0}$, contradicting the assumption that $(\pi, \mathcal{H})$ does not have non-zero $\G_{\xi}^{0}$--invariant vectors. The conclusion follows.
\end{proof}

\section{The Howe--Moore property implies $F$ is $2$-transitive}
\label{sec::const_unit_rep}

Let us consider $F$ to be primitive but not $2$-transitive. The goal of the section is to construct a non-trivial unitary representation of $\G:= U(F)^{+}$ that does not have non-zero $\G$-invariant vectors but has matrix coefficients that do not vanish at infinity. This will confirm that $\G$ does not have the Howe--Moore property and prove Theorem \ref{thm_iff_HM}.  

The two key ingredients are the following. The first one is the result of Proposition \ref{lem::hyp_ends} saying that when $F$ is primitive but not $2$-transitive there are non-hyperbolic ends $\xi \in \partial T_d$ (i.e.,  $\G_{\xi}= \G_\xi^{0}$, thus there is no hyperbolic element in $\G$ stabilizing $\xi$). The second one is  Corollary \ref{cor::end_stab_vector_1} saying for a unitary representation of $\G$ with non-vanishing matrix coefficients the $\G$-stabilizers of its vectors are either compact in $\G$, or of the form $\G_{\xi}^0$, for some $\xi \in \partial T_d$. We construct a non-trivial unitary representation of $\G$, without  non-zero $\G$-invariant vectors, and having a vector whose $\G$-stabilizer is $\G_{\xi}^0$, for a non-hyperbolic end $\xi \in \partial T_d$. 

For the rest of the section we fix a non-hyperbolic end $\xi \in \partial T_d$, thus having  $\G_{\xi}= \G_\xi^{0}$.

\subsection{Half sub-trees and additive $\CC$-pseudo-measures on $\mathcal{HT}$}

Recall, a \textbf{half sub-tree} of $T_d$ associated with an oriented edge $[v,w] \in E(T_d)$  is the unique maximal (for the inclusion) sub-tree in $T_d$, denoted by $T_{[v,w]}$, that contains the edge $[v, w]$, but does not contain any neighbors of $v$ other than $w$. We denote 
\begin{equation*}
\begin{split}
 \mathcal{HT} := \{ S \subseteq  T_d  \;  \vert \; & \exists \; [v_i,w_i] \in E(T_d), \; 1 \leq i \leq n < \infty,  \text{ such that }S=\bigcup\limits_{1 \leq i \leq n}  T_{[v_i,w_i]} \; \}. \\
 \end{split}
 \end{equation*}
So an element $S \in  \mathcal{HT} $ is by definition the union of a finite number of half sub-trees of $T_d$ and must contain at least one half sub-tree of $T_d$. For $S \in  \mathcal{HT} $ we set $\partial S := \{\eta \in \partial T_d \; \vert \; \eta \text{ an end of } S\}.$
 
Notice, the intersection of two different half sub-trees can either be an edge of $T_d$, or a finite, disjoint union of half sub-trees of $T_d$, or one of the half sub-trees is contained in the other one. 

Recall, we have fixed an end $\xi \in \partial T_d$ with the property that  $\G_{\xi}= \G_\xi^{0}$. This implies for every $g \in \G$  the end $g.\xi \in \G.\xi$ also has the property $\G_{g.\xi}= \G_{g.\xi}^{0}$. Notice, $\G_{g.\xi}=g\G_{\xi}g^{-1}$. 

By the definition of $\G$ it is easy to see $\G. \xi$ is dense in $\partial T_d$. Moreover, we denote $$\partial_{\G.\xi}S: = \G.\xi \cap \partial S, \text{ for every } S \in \mathcal{HT}.$$

\begin{definition}
\label{def::sign_measures}
An \textbf{additive $\CC$-pseudo-measure with respect to $\xi$} on $\mathcal{HT}$ is a function 
\begin{equation*}
\begin{split}
& \mu : \mathcal{HT} \to \CC, \; \; S \in \mathcal{HT} \mapsto \mu(\partial_{\G.\xi} S) \in \CC \\
& \mu(\bigcup\limits_{1 \leq i \leq n} \partial_{\G.\xi} S_i) = \sum\limits_{i=1}^{n} \mu(\partial_{\G.\xi} S_i) \\
& \; \;  \text{  for every pairwise disjoint } \{\partial_{\G.\xi} S_i\}_{1 \leq i\leq n}, \; n < \infty, \; \{S_i\}_{1 \leq i \leq n} \subset \mathcal{HT}.\\
\end{split}
\end{equation*}
We set $\mathcal{M}_{\CC, \G.\xi}(\mathcal{HT}):= \{\mu \text{ additive $\CC$-pseudo-measure  with respect to $\xi$ on } \mathcal{HT}\}.$
\end{definition}

Examples of additive $\CC$-pseudo-measures with respect to $\xi$ on $\mathcal{HT}$ are coming from functions on $\G. \xi$. More precisely, let $f:\G. \xi \to \CC$ be with finite support in $\G. \xi$.  Then the map $\mu_f : \mathcal{HT} \to \CC$ given by $ S \in \mathcal{HT} \mapsto \mu_{f}(\partial_{\G.\xi} S) :=\sum\limits_{\eta \in \G.\xi \cap \partial S} f(\eta) $ is indeed an additive $\CC$-pseudo-measure with respect to $\xi$ on $\mathcal{HT}$. 

The set $\mathcal{M}_{\CC, \G.\xi}(\mathcal{HT})$ is a $\CC$-vector space: 
\begin{enumerate}
\item
for every $\mu_1, \mu_2 \in \mathcal{M}_{\CC, \G.\xi}(\mathcal{HT})$ we define $\mu_1 + \mu_2$ by $S \in \mathcal{HT} \mapsto (\mu_1 + \mu_2)(\partial_{\G.\xi} S) : = \mu_1(\partial_{\G.\xi} S) + \mu_2(\partial_{\G.\xi} S)$, and this gives an additive $\CC$-pseudo-measure with respect to $\xi$ on $\mathcal{HT}$
\item
for every $a \in \CC$ and every $\mu \in \mathcal{M}_{\CC, \G.\xi}(\mathcal{HT})$, we define $a\cdot \mu$ by $S \in \mathcal{HT} \mapsto (a \cdot \mu)(\partial_{\G.\xi} S) : = a \cdot \mu(\partial_{\G.\xi} S)$, that is an additive $\CC$-pseudo-measure with respect to $\xi$ on $\mathcal{HT}$.
\end{enumerate}

As $\G$ acts on $T_d$, there are canonical actions of $\G$ on $\mathcal{HT}$ and on $\mathcal{M}_{\CC, \G.\xi}(\mathcal{HT})$.  For any $g \in \G$ and $S \in \mathcal{HT}$, $g.S \subset T_d$ is again an element of $\mathcal{HT}$. Then for every $\mu \in \mathcal{M}_{\CC, \G.\xi}(\mathcal{HT})$  and every $g \in \G$ we define 
\begin{equation}
\label{equ::G_acts_HT}
g.\mu \text{ by }S \in \mathcal{HT} \mapsto (g.\mu)(\partial_{\G.\xi} S):= \mu(\partial_{\G.\xi} (g^{-1}.S))= \mu(g^{-1}.\partial_{\G.\xi} S),
\end{equation}
and $g.\mu$ is again an element of $\mathcal{M}_{\CC, \G.\xi}(\mathcal{HT})$.

\begin{remark}
\label{rem::homothetic}
As the end $\xi \in \partial T_d$ does not admit hyperbolic elements in $\G$ that fix it, the action of $\G$ on $\G.\xi$ is without ``homothetic transformations''.  By this we mean if $S$ is a half sub-tree of $T_d$ with $\xi \in \partial S$ (thus $\partial S$ is an open standard neighborhood of $\xi$ with respect to cone topology on $T_d \cup \partial T_d$),  then there is no $g \in \G$ with the property that $g. \partial S \subsetneq  \partial S$. This would not be the case if $\xi$ was an end admitting hyperbolic elements in $\G$ fixing it.

Moreover, for every end $\eta \in \G.\xi$, where $\xi$ is our fixed end, any of its horoballs is set-wise stabilized by every $g \in \G_{\eta}= \G_{\eta}^{0}$. Thus, none of the those horoballs is contracted or expended by the action of $\G$. This is not the case if $\xi$ is an end admitting hyperbolic elements in $\G$ fixing it. Therefore, for $\xi$ such that $\G_{\xi}= \G_{\xi}^{0}$, one can say $\G$ acts on the orbit $\G. \xi$ by a sort of ``isometries''. Therefore, the case when $\xi$ is such that $\G_{\xi}= \G_{\xi}^{0}$ is very different than the case when $\xi$ admits  hyperbolic elements in $\G$ fixing it.
\end{remark}

\subsection{A Hilbert space with a family of inner products}

On the $\CC$-vector space $\mathcal{M}_{\CC, \G.\xi}(\mathcal{HT})$ we define a family of inner products that is indexed by the elements of  $\mathcal{HT}$: 
\begin{enumerate}
\item[]
for a given $S \in \mathcal{HT}$ and any $\mu_1,\mu_2 \in \mathcal{M}_{\CC, \G.\xi}(\mathcal{HT})$ we set $$\langle \mu_1, \mu_2 \rangle_{S}:= \mu_1(\partial_{\G.\xi} S) \cdot  \overline{\mu_2(\partial_{\G.\xi} S)}$$ and call $\langle \cdot, \cdot \rangle_{S}$ as \textbf{$S$-inner product} on $\mathcal{M}_{\CC, \G.\xi}(\mathcal{HT})$.
\end{enumerate}

\begin{remark}
\label{rem::def_S_scalar_product}
We list the following easy remarks about $S$-inner products on $\mathcal{M}_{\CC, \G.\xi}(\mathcal{HT})$:
\begin{enumerate}
\item
For every $S \in \mathcal{HT}$, the $S$-inner product verifies the linearity and conjugate symmetry. Indeed, for all $\mu_1,\mu_2, \mu \in \mathcal{M}_{\CC, \G.\xi}(\mathcal{HT})$ and every $a \in \CC$ we have:
\begin{equation*}
\langle a \cdot \mu_1,\mu \rangle_{S}=  (a\cdot \mu_1)(\partial_{\G.\xi} S) \cdot \overline{\mu(\partial_{\G.\xi} S)}= a \cdot \mu_1(\partial_{\G.\xi} S) \cdot \overline{\mu(\partial_{\G.\xi} S)} = a \cdot  \langle \mu_1,\mu \rangle_{S}.
\end{equation*}

\begin{equation*}
\langle \mu_1 + \mu_2,\mu \rangle_{S}= (\mu_1(\partial_{\G.\xi} S) + \mu_2(\partial_{\G.\xi} S)) \cdot \overline{\mu(\partial_{\G.\xi} S)} = \langle \mu_1 ,\mu \rangle_{S} + \langle \mu_2,\mu \rangle_{S}.
\end{equation*}

\begin{equation*}
\langle \mu_1 , \mu_2 \rangle_{S} =  \mu_1(\partial_{\G.\xi} S) \cdot \overline{\mu_2(\partial_{\G.\xi} S)} = \overline{\overline{\mu_1(\partial_{\G.\xi} S) }\cdot \mu_2(\partial_{\G.\xi} S)} = \overline{\langle \mu_2 , \mu_1 \rangle_{S}}.
\end{equation*}

\item
Let $S \in \mathcal{HT}$. If $\mu_1=\mu_2 \in \mathcal{M}_{\CC, \G.\xi}(\mathcal{HT})$ then one can easily notice
$$0 \leq \langle \mu_1, \mu_1\rangle_{S}= \mu_1(\partial_{\G.\xi} S) \cdot \overline{\mu_1(\partial_{\G.\xi} S)}= \vert  \mu_1(\partial_{\G.\xi} S) \vert^{2}  < \infty.$$ 
Then for any $\mu \in \mathcal{M}_{\CC, \G.\xi}(\mathcal{HT})$ and any $S \in \mathcal{HT}$ we can denote $\vert \mu \vert_{S} :=  \sqrt{\langle \mu , \mu \rangle_S}$.  One can easily verify $\vert \mu_1 + \mu_2 \vert_{S} \leq \vert \mu_1\vert_{S} + \vert \mu_2 \vert_{S} $, for every $\mu_1,\mu_2 \in \mathcal{M}_{\CC, \G.\xi}(\mathcal{HT})$.  Therefore, $\vert \cdot \vert_S$ is a seminorm on $\mathcal{M}_{\CC, \G.\xi}(\mathcal{HT})$, for every $S \in \mathcal{HT}$.

\item
Let $S \in \mathcal{HT}$. By the mere definition of the $S$-inner product, for every $\mu_1, \mu_2  \in \mathcal{M}_{\CC, \G.\xi}(\mathcal{HT})$  the Cauchy--Schwarz inequality holds true
$$0 \leq \vert \langle \mu_1, \mu_2 \rangle_{S} \vert \leq \vert \mu_1 \vert_S \cdot \vert  \mu_2\vert_S. $$
\item
From the above one can easily deduce for every $S \in \mathcal{HT}$, the $S$-inner product $\langle \cdot,\cdot \rangle_{S}$ is a positive semi-definite Hermitian form on $\mathcal{M}_{\CC, \G.\xi}(\mathcal{HT})$ with $\vert \cdot \vert_S$ being the associated seminorm.

\end{enumerate}
\end{remark}

\begin{definition}
\label{def::Cauchy_seq}
A sequence $\{\mu_n\}_{n \geq 1} \subset (\mathcal{M}_{\CC, \G.\xi}(\mathcal{HT}), \{\langle \cdot , \cdot \rangle_S\}_{S \in \mathcal{HT}})$ is called \textbf{Cauchy with respect to the family of seminorms $\{\vert \cdot \vert_S\}_{S \in \mathcal{HT}}$} if for every $\epsilon >0$ and every $S \in \mathcal{HT}$, there exists $N = N(\epsilon, S) \geq 1$ such that $ \vert \mu_{n_1} - \mu_{n_2} \vert_S < \epsilon$, for all $n_1,n_2 > N$. 

We say $\{\mu_n\}_{n \geq 1} \subset (\mathcal{M}_{\CC, \G.\xi}(\mathcal{HT}), \{\langle \cdot , \cdot \rangle_S\}_{S \in \mathcal{HT}})$ \textbf{converges to} $\mu \in (\mathcal{M}_{\CC, \G.\xi}(\mathcal{HT}), \{\langle \cdot , \cdot \rangle_S\}_{S \in \mathcal{HT}})$ if for every $\epsilon >0$ and every $S \in \mathcal{HT}$, there exists $N = N(\epsilon, S) \geq 1$ such that $ \vert \mu - \mu_{n} \vert_S < \epsilon$, for every $n > N$. 
\end{definition}

\begin{lemma}
\label{lem::completness}
The $\CC$-vector space $(\mathcal{M}_{\CC, \G.\xi}(\mathcal{HT}), \{\langle \cdot , \cdot \rangle_S\}_{S \in \mathcal{HT}})$ is complete with respect to the family of seminorms $\{\vert \cdot \vert_S\}_{S \in \mathcal{HT}}$.
\end{lemma}
\begin{proof}
Let $\{\mu_n\}_{n \geq 1} \subset (\mathcal{M}_{\CC, \G.\xi}(\mathcal{HT}), \{\langle \cdot , \cdot \rangle_S\}_{S \in \mathcal{HT}})$ be a Cauchy sequence with respect to the family of seminorms $\{\vert \cdot \vert_S\}_{S \in \mathcal{HT}}$. We need to show $\{\mu_n\}_{n \geq 1}$ admits a limit $\mu$ in $(\mathcal{M}_{\CC, \G.\xi}(\mathcal{HT}), \{\langle \cdot , \cdot \rangle_S\}_{S \in \mathcal{HT}})$. Indeed, consider $S \in \mathcal{HT}$.  Then $\{\mu_n\}_{n \geq 1}$ is a Cauchy sequence with respect to the seminorm $\vert \cdot \vert_{S}$. In particular, by the mere definition, we have $\{\mu_n(\partial_{\G.\xi} S)\}_{n \geq 1}$ is a Cauchy sequence in $\CC$ and so it admits a limit in $\CC$ that we denote by $\mu(\partial_{\G.\xi} S)$. We claim $\mu$ is an element of $(\mathcal{M}_{\CC, \G.\xi}(\mathcal{HT}), \{\langle \cdot , \cdot \rangle_S\}_{S \in \mathcal{HT}})$.  By its definition, it is enough to prove $\mu$ is an additive $\CC$-pseudo-measure with respect to $\xi$. But this follows from the properties of limits of sums and the additivity of $\mu_n$.

As $S \in \mathcal{HT}$ is arbitrary, we can conclude $\mu \in (\mathcal{M}_{\CC, \G.\xi}(\mathcal{HT}), \{\langle \cdot , \cdot \rangle_S\}_{S \in \mathcal{HT}})$ and $\{\mu_n\}_{n \geq 1}$ converges to $\mu$ in $(\mathcal{M}_{\CC, \G.\xi}(\mathcal{HT}), \{\langle \cdot , \cdot \rangle_S\}_{S \in \mathcal{HT}})$.
\end{proof}

Let us notice the following geometrical phenomenon.

For $S \in \mathcal{HT}$ we denote by $Conv(S) \subseteq T_d$ the (unique) minimal convex set of $T_d$ containing $S$. One can see $\partial S = \partial Conv(S)$, and in particular, $\partial_{\G.\xi} S = \partial_{\G.\xi} Conv(S)$. Then $\G_{Conv(S)}^0:= \{g \in \G \; \vert \; g(Conv(S))=Conv(S) \text{ pointwise}\}$ is a closed subgroup of $\G$ (and thus compact). The subgroups $\G_{Conv(S)}^0$, with $S \in \mathcal{HT}$, are a stronger version of the theory of root group datum associated with groups acting on Bruhat--Tits buildings. Moreover, $\G_{Conv(S)}^0$ is a larger group when the half sub-trees of $S$ are approaching the visual boundary of $T_d$, thus $S$ being smaller and smaller with respect to the cone topology on $T_d \cup \partial T_d$.  Therefore, $\G_{Conv(S)}^0$ behaves like a $p$-adic number that can be very small in $\QQ_p$, with respect to the $p$-adic norm, but infinite in $\RR$.

\begin{lemma}
\label{lem::root_inf_index}
Let $S \in \mathcal{HT}$ and $\eta \in \partial T_d - \partial S$. Then $\G_{Conv(S)}^0 \cap \G_{\eta}$ has infinite index in $\G_{Conv(S)}^0$.
\end{lemma}
 
\begin{proof}
For simplicity we will assume $S = Conv(S)$ in what follows.
As $\eta \in \partial T_d - \partial S$, there is a unique geodesic ray $[y_0,\eta) \subset T_d$ such that $[y_0,\eta) \cap S = \{y_0\}$. We denote the consecutive vertices of $[y_0,\eta)$ by $\{y_j\}_{j\geq 0}$. It is clear that for every $1 \leq j$ the group $\G_{S \cup [y_0,y_j]}^0$ has finite index in $\G_{S \cup [y_0,y_{j-1}]}^0$, and $$[\G_{S}^0 : \G_{S \cup [y_0,y_{j-1}]}^0 ] \leq [\G_{S}^0 : \G_{S \cup [y_0,y_{j}]}^0 ] < \infty.$$

Suppose $\G_{S}^0 \cap \G_{\eta}$ has finite index in $\G_{S}^0$. Then $\G_{S}^0 \cap \G_{\eta} = \G_{S \cup [y_0,\eta)}^0$ and by our assumption $[\G_{S}^0 : \G_{S \cup [y_0,\eta)}^0] < \infty$. This implies that for some $ n \geq 0$ we have $\G_{S \cup [y_0,y_{n}]}^0 = \G_{S \cup [y_0,y_{j}]}^0$, for every $j \geq n$. 

To obtain a contradiction we proceed as in the proof of Lemma \ref{lem::Stab_end_notcompact} and claim for every $j \geq n$, we have $F_{i([y_j,y_{j+1}])} \leq F_{i([y_{j+1},y_{j+2}])}$, where $F_{i([y_{j},y_{j+1}])}$ is the stabilizer in $F$ of the color $i([y_{j},y_{j+1}]) \in \{1,..., d\}$. Indeed, by the definition of $\G$ one can construct elements $g \in \G$ such that $g([y_j,y_{j+1}])= [y_j,y_{j+1}]$ and $g \in \G_{S \cup [y_0,y_{j+1}]}^0$. If moreover, $F_{i([y_j,y_{j+1}])}$ was not a subgroup  of  $F_{i([y_{j+1},y_{j+2}])}$, then one would be able to construct $g\in  \G_{S \cup [y_0,y_{j+1}]}^0$ such that $g([y_{j+1},y_{j+2}]) \neq [y_{j+1},y_{j+2}]$ and thus $\G_{S \cup [y_0,y_{j+2}]}  < \G_{S \cup [y_0,y_{j+1}]}$, given a contradiction with $\G_{S \cup [y_0,y_{n}]}^0 = \G_{S \cup [y_0,y_{j}]}^0$, for every $j \geq n$. Therefore, the claim follows.

Since the geodesic ray $[y_0,\eta)$ is $d$-colored, there are at least two different colors $k_1 \neq k_2 \in \{1,...,d\} $ on this geodesic ray such that $F_{k_1}=F_{k_2}$. As in the proof of Lemma \ref{lem::Stab_end_notcompact}, choose an edge $e_1 \subset [y_0,\eta)$ colored $k_1$ and consider the unique bi-infinite geodesic line $\ell$ colored only with $k_1$ and $k_2$, alternatively. Then $\G_{\ell}:=\{ g \in \G \; \vert \; g \text{ stabilizes } \ell \}$ contains $\G_{e_{1}}$, and is a non-compact, proper open subgroup in $\G$, given a contradiction. The lemma is proven.
 \end{proof}
 
Therefore, for $S_1 \neq S_2 \in \mathcal{HT}$ (in particular, $\partial S_1 \neq \partial S_2$), the subgroup $ \G_{Conv(S_1)}^0 \cap \G_{Conv(S_2)}^{0}$ has infinite index in both $\G_{Conv(S_1)}^0$ and $\G_{Conv(S_2)}^0$, and thus it has zero measure with respect to the Haar measure of each of the compact groups $\G_{Conv(S_1)}^0,\G_{Conv(S_2)}^0$. 

\subsection{The regular horocyclic unitary representation of $\G$}

We are now ready to define an action of $\G$ on the complex Hilbert space $(\mathcal{M}_{\CC, \G.\xi}(\mathcal{HT}), \{\langle \cdot , \cdot \rangle_S\}_{S \in \mathcal{HT}})$. This is given as follows:
\begin{equation*}
\begin{split}
 g \in \G \mapsto \; &\pi(g): \mathcal{M}_{\CC, \G.\xi}(\mathcal{HT}) \to \mathcal{M}_{\CC, \G.\xi}(\mathcal{HT}) \\
& \pi(g) \mu := g.\mu, \text{ for any } \mu \in \mathcal{M}_{\CC, \G.\xi}(\mathcal{HT}), \\
\end{split}
\end{equation*}
where $g.\mu$ is defined in (\ref{equ::G_acts_HT}).
Then notice for 	any $S \in \mathcal{HT}$ and any $g \in \G$ one has 
$$\langle \mu_1, \mu_2 \rangle_{S} = \langle \pi(g) \mu_1 , \pi(g) \mu_2 \rangle_{g.S}, \text{ for every } \mu_1, \mu_2 \in \mathcal{M}_{\CC, \G.\xi}(\mathcal{HT})$$
therefore obtaining a $\G$-action on the family of inner products  $\{\langle \cdot , \cdot \rangle_S\}_{S \in \mathcal{HT}}$. With this $\G$-action on $\{\langle \cdot , \cdot \rangle_S\}_{S \in \mathcal{HT}}$, for $g \in \G$ the map $$\pi(g) : (\mathcal{M}_{\CC, \G.\xi}(\mathcal{HT}), \{\langle \cdot , \cdot \rangle_S\}_{S \in \mathcal{HT}}) \to (\mathcal{M}_{\CC, \G.\xi}(\mathcal{HT}), \{\langle \cdot , \cdot \rangle_S\}_{S \in \mathcal{HT}}) \text{ is unitary}. $$

\begin{definition}
\label{def::horo_unitary_rep}
The unitary representation defined above $$\pi : \G \to \mathcal{U}((\mathcal{M}_{\CC, \G.\xi}(\mathcal{HT}), \{\langle \cdot , \cdot \rangle_S\}_{S \in \mathcal{HT}}))$$ is called the \textbf{regular horocyclic unitary representation} of $\G$.
\end{definition}

\begin{lemma}
\label{lem::strong_cont_horo}
Let $F$ be primitive. Then the regular horocyclic unitary representation $(\pi, \mathcal{M}_{\CC, \G.\xi}(\mathcal{HT}), \{\langle \cdot , \cdot \rangle_S\}_{S \in \mathcal{HT}})$ of $\G$ is $\G$-strongly continuous: for every $\mu \in \mathcal{M}_{\CC, \G.\xi}(\mathcal{HT})$ the map $g \in \G \mapsto \pi(g)\mu \in (\mathcal{M}_{\CC, \G.\xi}(\mathcal{HT}), \{\langle \cdot , \cdot \rangle_S\}_{S \in \mathcal{HT}})$ is continuous with respect to the topology of $\G$ and the topology on $\mathcal{M}_{\CC, \G.\xi}(\mathcal{HT})$ given by the seminorms  $\{\vert \cdot \vert_S\}_{S \in \mathcal{HT}})$.
\end{lemma}

\begin{proof}
Let $\mu \in \mathcal{M}_{\CC, \G.\xi}(\mathcal{HT})$. It is enough to prove for every $\epsilon >0$ and every $S \in \mathcal{HT}$ there is an open neighborhood $U= U(\epsilon, S)$ of $e \in \G$, with respect to the compact-open topology on $\G$, such that $\vert \mu - \pi(g)\mu\vert_S < \epsilon$, for every $g \in U$. As $S$ is a union of $1 \leq n <\infty$ half sub-trees $T_{[v_i,w_i]}$ of $T_d$, for $1 \leq i \leq n$, we consider a connected finite subtree $B$ of $T_d$ that contains the edges $[v_i,w_i]$, for every $1 \leq i \leq n$. Then  $\G_{B}:= \{g \in \G \; \vert \;  g(B)=B \text{ point-wise}\}$ is an open compact neighborhood of $e \in \G$. As every element $g \in \G_{B}$ fixes point-wise the edge $[v_i,w_i]$, for every $1 \leq i \leq n$, then $g$ will preserve (but not necessarily point-wise fix) the ideal boundary of each of the half sub-trees  $T_{[v_i,w_i]}$. Therefore, for every $g \in \G_{B}$ we have that $g^{-1}.\partial S = \partial S$ set-wise (but not necessarily point-wise), in particular, $g^{-1}.\partial_{\G.\xi} S = \partial_{\G.\xi} S$. And so, by the definitions we have $(\pi(g)\mu)(\partial_{\G.\xi} S)= \mu(g^{-1} . \partial_{\G.\xi} S)= \mu(\partial_{\G.\xi} S)$ and by the definition of $\vert \cdot \vert_S$ we have $\vert \mu - \pi(g)\mu\vert_S =0  < \epsilon$, for every $g \in \G_{B}$. The lemma is proven.
\end{proof}

\begin{lemma}
\label{lem::nonzero_G_inv_vect}
Let $F$ be primitive. Then the regular horocyclic unitary representation $(\pi, \mathcal{M}_{\CC, \G.\xi}(\mathcal{HT}), \{\langle \cdot , \cdot \rangle_S\}_{S \in \mathcal{HT}})$ of $\G$ does not admit non-zero $\G$-invariant vectors.
\end{lemma}
\begin{proof}
Suppose there is $\mu \in \mathcal{M}_{\CC, \G.\xi}(\mathcal{HT}) \setminus \{0\}$ such that $\pi(g)\mu = g.\mu = \mu$ for every $g \in \G$. This means, for every $S \in \mathcal{HT}$ and every $g \in \G$ we have $$\mu(\partial_{\G.\xi} S) = (g.\mu)(\partial_{\G.\xi} S) = \mu(g^{-1}.\partial_{\G.\xi} S)= \mu(\partial_{\G.\xi} g^{-1}.S).$$

Take $S \in \mathcal{HT}$ to be a half sub-tree $T_{[v,w]}$ and in it consider all the edges at distance $2$ from $v$, thus at distance $1$ from $w$. The number of those edges is $(d-1)^2$ and denote this set by $E(2,v) \subset T_{[v,w]}$. As $\G$ is edge-transitive on $T_d$, for each edge $e \in E(2,v)$ there is a hyperbolic element $\gamma_e \in \G$ such that $\gamma_e([v,w])=e$ and with the edges $[v,w],e$ being both in the translation axis of $\gamma_e$. Then $\gamma_e. T_{[v,w]}$ is a proper sub-tree of $T_{[v,w]}$ and notice for every $e_1 \neq e_2 \in E(2,v)$ we have $\gamma_{e_1}. T_{[v,w]} \cap \gamma_{e_2}. T_{[v,w]} = \emptyset$. One can also notice $\partial T_{[v,w]}$ is the finite disjoint union $\bigcup\limits_{e \in E(2,v)} \partial \gamma_e.T_{[v,w]}$.  By our assumption we then have 
$$\mu(\partial T_{[v,w]}) =\mu(\bigcup\limits_{e \in E(2,v)} \partial \gamma_e.T_{[v,w]}) =\sum\limits_{e \in E(2,v)} \mu(\partial \gamma_e.T_{[v,w]})= (d-1)^2 \cdot \mu(\partial T_{[v,w]}))$$
implying $\mu(\partial T_{[v,w]})=0$ and thus $\mu\equiv 0$. This gives a contradiction and the lemma follows.
\end{proof}

We also give here two definitions that are used in the next section.

\begin{definition}
\label{def::S_to_infty}
Let $\{S_n\}_{n \geq 1} \subset \mathcal{HT}$. We write $S_n \xrightarrow[n \to\infty]{} \infty$ if for every convex finite set $B \subset T_d$ there is $N= N(B)$ such that for every $n \geq N$ we have $S_n \subset T_d - B$. If this is the case, we say the sequence $\{S_n\}_{n \geq 1} \subset \mathcal{HT}$ \textbf{converges to infinity}. 
\end{definition}

\begin{definition}
\label{def::vanish_infty}
Let $\mu_1,\mu_2 \in (\mathcal{M}_{\CC, \G.\xi}(\mathcal{HT}), \{\langle \cdot , \cdot \rangle_S\}_{S \in \mathcal{HT}})$. We say the \textbf{$(\mu_1,\mu_2)$-matrix coefficient of $\pi$ vanishes at infinity} if for every $\epsilon >0$ there exist a compact $K=K(\epsilon) \subset \G$ and a convex finite set $B=B(\epsilon) \subset T_d$ such that for every $S \in \mathcal{HT}$, with $S \subset T_d - B$, and every $g \in  \G - K$ we have $\vert \langle \pi(g)\mu_1,\mu_2 \rangle_S \vert < \epsilon$.  We say the \textbf{$(\mu_1,\mu_2)$-matrix coefficient of $\pi$ does not vanish at infinity} if there are $\epsilon > 0$, a sequence $\{g_n\}_{n \geq 1} \subset \G$, with $g_n \xrightarrow[n \to\infty]{} \infty$,  and a sequence $\{S_n\}_{n \geq 1} \subset \mathcal{HT}$, with $S_n \xrightarrow[n \to\infty]{} \infty$, such that for every $n \geq 1$ we have $\vert \langle \pi(g_n)\mu_1,\mu_2 \rangle_{S_n} \vert \geq \epsilon$.
\end{definition}

\subsection{The proof of Theorem \ref{thm_iff_HM}}

Recall, we have fixed an end $\xi \in \partial T_d$ with the property that  $\G_{\xi}= \G_\xi^{0}$. This implies for every $g \in \G$  the end $g.\xi \in \G.\xi$ also has the property $\G_{g.\xi}= \G_{g.\xi}^{0}$. Notice,  $\G_{g.\xi}=g\G_{\xi}g^{-1}$. 

Consider the additive $\CC$-pseudo-measure with respect to $\xi$
 \begin{equation}
 \mu_\xi : \mathcal{HT} \to \CC \text{ given by } S \in \mathcal{HT} \mapsto
\mu_\xi(\partial_{\G.\xi} S)= \left\{ \begin{array}{rcl} 1 & \mbox{if} & \xi \in \partial S\\
 0 & \mbox{if} & \xi \notin \partial S\\
\end{array}\right. 
\end{equation}
and notice $\pi(g)\mu_\xi= g.\mu_\xi= \mu_\xi$ for every $g \in \G_{\xi}= \G_\xi^{0}$.

Consider the completion of $span(\G.\mu_\xi)$ in complex Hilbert space $(\mathcal{M}_{\CC, \G.\xi}(\mathcal{HT}), \{\langle \cdot , \cdot \rangle_S\}_{S \in \mathcal{HT}})$ and denote it by $(\mathcal{M}_{\CC, \G.\xi}(\mathcal{HT})^{\xi}, \{\langle \cdot , \cdot \rangle_S\}_{S \in \mathcal{HT}})$. Then the restriction $\pi\vert_{\mathcal{M}_{\CC, \G.\xi}(\mathcal{HT})^{\xi}}$ of the regular horocyclic unitary representation of $\G$ to the Hilbert subspace $\mathcal{M}_{\CC, \G.\xi}(\mathcal{HT})^{\xi}$ is called \textbf{regular $\xi$-horocyclic unitary representation} of $\G$ and is denoted by $(\pi_{\xi},\mathcal{M}_{\CC, \G.\xi}(\mathcal{HT})^{\xi}, \{\langle \cdot , \cdot \rangle_S\}_{S \in \mathcal{HT}})$.

\begin{lemma}
\label{lem::unit_rep_not_vanish}
Let $F$ be primitive but not $2$-transitive. Let  $\xi \in \partial T_d$ with the property that  $\G_{\xi}= \G_\xi^{0}$. Then the regular $\xi$-horocyclic unitary representation $(\pi_{\xi},\mathcal{M}_{\CC, \G.\xi}(\mathcal{HT})^{\xi}, \{\langle \cdot , \cdot \rangle_S\}_{S \in \mathcal{HT}})$ of $\G$ does not admit non-zero $\G$-invariant vectors and the $(\mu_\xi,\mu_\xi)$-matrix coefficient does not vanish at infinity. In particular, for every $g \in \G_{\xi}$ there is a half sub-tree $S=S(g) \in \mathcal{HT}$ such that $\vert \langle \pi_{\xi}(g)\mu_\xi, \mu_\xi \rangle_S \vert =1$.
\end{lemma}
\begin{proof}
That the unitary representation $(\pi_{\xi},\mathcal{M}_{\CC, \G.\xi}(\mathcal{HT})^{\xi}, \{\langle \cdot , \cdot \rangle_S\}_{S \in \mathcal{HT}})$ of $\G$ does not admit non-zero $\G$-invariant vectors follows from Lemma \ref{lem::nonzero_G_inv_vect}. 

The remaining claim of the Lemma  goes as follows. Fix a vertex $v_0 \in T_d$. Let $g \in \G_{\xi}$ and by our assumptions $g$ is an elliptic element of $ \G_{\xi} \leq \G$. Let $[v(g),w(g)] \subset [v_0,\xi)$ be the first edge point-wise fixed by $g$, where $w(g) \in (v(g),\xi)$. So $g([v(g),\xi))=[v(g),\xi)$ point-wise. In particular, the half sub-tree $T_{[v(g),w(g)]}$ is stabilized by $g$ (but not necessarily point-wise fixed) and so $g^{-1}.\partial T_{[v(g),w(g)]}= g.\partial T_{[v(g),w(g)]}=\partial T_{[v(g),w(g)]}$ set-wise. This implies for $S = T_{[v(g),w(g)]}$ we have $\langle \pi_{\xi}(g)\mu_\xi, \mu_\xi \rangle_S= (\pi_{\xi}(g)\mu_\xi) (\partial_{\G.\xi} S) \cdot \overline{\mu_\xi(\partial_{\G.\xi} S)} =  \mu_\xi (g^{-1}.\partial_{\G.\xi} S) \cdot \overline{\mu_\xi(\partial_{\G.\xi} S)} = \mu_\xi (\partial_{\G.\xi} S) \cdot \overline{\mu_\xi(\partial_{\G.\xi} S)} = 1$.  Notice, as $g \in \G_{\xi}  \to \infty$ the associated  $S = T_{[v(g),w(g)]} \to \infty$ as well. This proves the lemma.
\end{proof}

\begin{lemma}
\label{lem::unit_rep_vanish}
Let $F$ be primitive but not $2$-transitive. Let  $\xi \in \partial T_d$ with the property that  $\G_{\xi}= \G_\xi^{0}$, and $\mu \in span(\G.\mu_{\xi})$. Let  $\gamma \in \G$ be a hyperbolic element with attracting and repelling ends $\eta_{-},\eta_{+} \in \partial T_d$. Then for every $S \in \mathcal{HT}$ with $\eta_{+}, \eta_{-} \notin \partial S$ we have $\vert \langle \pi_{\xi}(\gamma^{n})\mu, \mu \rangle_S \vert \xrightarrow[ n \to\infty]{} 0$ and $\langle \pi_{\xi}(\gamma^{- n})\mu, \mu \rangle_S \vert \xrightarrow[ n \to\infty]{} 0$
\end{lemma}
\begin{proof}
As $\mu \in span(\G.\mu_{\xi})$, the support of $\mu$ in $\G.\xi$ is a finite set.  

Let $S \in \mathcal{HT}$ with $\eta_{+}, \eta_{-} \notin \partial S$. It is enough to consider only the case when $S \in \mathcal{HT}$ is a half sub-tree with $\partial S \cap \supp(\mu) \neq \emptyset$, in particular, $\partial_{\G.\xi} S \cap \supp(\mu) \neq \emptyset$. As $\gamma \in \G$ is hyperbolic and $\eta_{+}, \eta_{-} \notin \partial S$, there exists a power $N>0$ such that for every $n \geq N$, we have $\gamma^{n}.S \cap S = \emptyset, \; \gamma^{-n}.S \cap S = \emptyset$, and $\supp(\mu)  \cap \partial  \gamma^{n}.S = \emptyset, \supp(\mu)  \cap \partial\gamma^{-n}.S = \emptyset$. The conclusion follows.
\end{proof}

\begin{proof}[Proof of Theorem \ref{thm_iff_HM}]. Let $F$ be primitive. By \cite{BM00b}, if $F$ is moreover $2$-transitive then the group $\G$ has the Howe--Moore property. Conversely, assume $\G$ has the Howe--Moore property. Then if $F$ was not $2$-transitive, then by Lemma \ref{lem::unit_rep_not_vanish} there would exist a unitary representation of $\G$ without non-zero $\G$-invariant vectors and with matrix coefficients that do not vanish at infinity, contradicting the Howe--Moore property of $\G$.  The theorem is proven.
\end{proof}

Let us give some final remarks about our construction of the regular horocyclic and $\xi$-horocyclic unitary representations of $\G$. 

First of all, the construction of the regular horocyclic unitary representations of $\G$ will not work if one considers $\xi \in \partial T_d$ admitting hyperbolic elements in $\G$ fixing it. This is because the action of $\G$ on $\G.\xi$ will contract or expend the horoballs around every $\eta \in \G. \xi$, as explained in Remark \ref{rem::homothetic}. In particular, the corresponding additive $\CC$-pseudo-measure $\mu_{\xi}$ with respect to $\xi$ that apparently is fixed by the subgroup $\G_{\xi}$, and $\G_{\xi}^{0} \subsetneq \G_{\xi}$, does not make sense to be considered. This is explained by the following lemma.

\begin{lemma}
\label{lem::no_G_xi-inv_measures}
Let $\eta \in \partial T_d$ such that $\G_{\eta}^{0} \subsetneq \G_{\eta}$. Suppose there is an additive $\CC$-pseudo-measure $\mu$ with respect to $\eta$ that is $\G_{\eta}$-invariant, i.e., $g.\mu = \mu$ for every $g \in \G_{\eta}$. Then $\mu$ is $\G$-invariant.
\end{lemma}
\begin{proof}
Let $\gamma \in \G_{\eta}$ that is hyperbolic, with attracting endpoint $\eta$. Consider the associated negative contraction subgroup $U_{\gamma}^{-}$. We claim $h.\mu = \mu$ for every $h \in U_{\gamma}^{-}$. Indeed, let $h \in U_{\gamma}^{-}$ and let $S \in \mathcal{HT}$. Because $\gamma^{n} h \gamma^{-n} \xrightarrow[n \to\infty]{} e$, there exists $N >0$ such that $\gamma^{N} h \gamma^{-N}. S= S$ set-wise. In particular,  we have $$\partial \gamma^{N} h \gamma^{-N}. S = \partial S \;  \text{ and } \; \partial_{\G.\eta} \gamma^{N} h \gamma^{-N}. S = \partial_{\G.\eta} S.$$
This implies
$$ (\gamma^{N} h \gamma^{-N}.\mu)(\partial_{\G.\eta} S)=  \mu(\gamma^{N} h^{-1} \gamma^{-N}.\partial_{\G.\eta} S)= \mu(\partial_{\G.\eta} \gamma^{N} h^{-1} \gamma^{-N}.S) = \mu(\partial_{\G.\eta} S).$$
By applying $\gamma^{-N} $ to the above equality and using that $\gamma. \mu = \mu$ we obtain 
$$(h \gamma^{-N}.\mu)(\partial_{\G.\eta} S)=  (\gamma^{-N}.\mu)(\partial_{\G.\eta} S) \Leftrightarrow (h.\mu)(\partial_{\G.\eta} S)=  \mu(\partial_{\G.\eta} S).$$
As $S \in \mathcal {HT} $ and $h \in U_{\gamma}^{-}$ are arbitrary, we have $h.\mu=  \mu$ and the claim follows.

Now by Lemma \ref{lem::hyp_C_0} we know $\G= \left\langle U^{+}_{\gamma},U^{-}_{\gamma}\right\rangle$, where $U^{+}_{\gamma} \subset \G_{\eta}$ and the lemma is proven.

\end{proof}

\begin{bibdiv}
\begin{biblist}

\bib{Amann}{thesis}{
author={Amann, Olivier},
 title={Group of tree-automorphisms and their unitary representations},
 note={PhD thesis},
 school={ETH Z\"urich},
 year={2003},
 }

\bib{BM}{book}{
   author={Bekka, M. Bachir},
   author={Mayer, Matthias},
   title={Ergodic theory and topological dynamics of group actions on homogeneous spaces},
   series={London Mathematical Society lecture note series},
   volume={269},
   publisher={Cambridge, U.K.; New York: Cambridge University Press},
   date={2000},
}

\bib{BM00a}{article}{
   author={Burger, Marc},
   author={Mozes, Shahar},
   title={Groups acting on trees: from local to global structure},
   journal={Inst. Hautes \'Etudes Sci. Publ. Math.},
   number={92},
   date={2000},
   pages={113--150 (2001)},
}

\bib{BM00b}{article}{
   author={Burger, Marc},
   author={Mozes, Shahar},
   title={Lattices in products of trees},
   journal={Inst. Hautes \'Etudes Sci. Publ. Math.},
   number={92},
   date={2000},
   pages={151--194 (2001)},
}

\bib{CaMe11}{article}{
   author={Caprace, Pierre-Emmanuel},
   author={De Medts, Tom},
   title={Simple locally compact groups acting on trees and their germs of
   automorphisms},
   journal={Transform. Groups},
   volume={16},
   date={2011},
   number={2},
   pages={375--411},
   issn={1083-4362},
}
\bib{CaCi}{article}{
  author={Caprace, P-E.},
   author={Ciobotaru, C.},
   title={Gelfand pairs and strong transitivity for Euclidean buildings},
 journal={Ergodic Theory and Dynamical Systems},
   doi={},
   volume={35},
   issue={04}
   date={2015},
   pages={1056--1078},
 note={arXiv:1304.6210},
}

\bib{CCL+}{article}{
   author={Cluckers, Raf},
   author={de Cornulier, Yves},
   author={Louvet, Nicolas},
   author={Tessera, Romain},
   author={Valette, Alain},
   title={The Howe-Moore property for real and $p$-adic groups},
   journal={Math. Scand.},
   volume={109},
   date={2011},
   number={2},
   pages={201--224},
   issn={0025-5521},
   review={\MR{2854688 (2012m:22008)}},
}

\bib{Ci_a}{article}{
   author={Ciobotaru, C.},
   title={Parabolically induced unitary representations of the universal group $U(F)^+$ are $C_0$},
    journal={Mathematica Scandinavica},
    volume={125},
   date={2019},
   number={1},
   pages={113--134},
 note={arxiv:1409.2245} ,
   doi={},
}

\bib{Ci_bb}{unpublished}{
   author={Ciobotaru, C.},
   title={Infinitely generated Hecke algebras with infinite presentation},
    note={accepted by Algebras and Representation Theory, subject to corrections, arxiv:1603.04599},
 }
  
\bib{Cio}{article}{
   author={Ciobotaru, C.},
   title={A unified proof of the Howe--Moore property},
   journal={Journal of Lie Theory},
   volume={25},
   date={2015},
   pages={65--89},
   issn={1083-4362},
 note={arXiv:1403.0223},
   }

\bib{Cio_thesis}{thesis}{
author={Ciobotaru, Corina},
 title={Analytic aspects of locally compact groups acting on Euclidean buildings},
 note={PhD thesis},
 school={UCLouvain},
 year={2014},
 }

\bib{FigaNebbia}{book}{
   author={Fig{\`a}-Talamanca, Alessandro},
   author={Nebbia, Claudio},
   title={Harmonic analysis and representation theory for groups acting on
   homogeneous trees},
   series={London Mathematical Society Lecture Note Series},
   volume={162},
   publisher={Cambridge University Press},
   place={Cambridge},
   date={1991},
}

\bib{HM79}{article}{
  author={Howe, Roger E.},
   author={Moore, Calvin C.},
   title={Asymptotic properties of unitary representations},
   pages={72--96},
   journal={Journal of Functional Analysis},
   volume={32}
   year={1979},
  }

\bib{Ti70}{article}{
   author={Tits, Jacques},
   title={Sur le groupe des automorphismes d'un arbre},
   language={French},
   conference={
      title={Essays on topology and related topics (M\'emoires d\'edi\'es
      \`a Georges de Rham)},
   },
   book={
      publisher={Springer},
      place={New York},
   },
   date={1970},
   pages={188--211},
}

\end{biblist}
\end{bibdiv}

\end{document}